\newtheorem{theorem}{\bf Theorem}[section]
\newtheorem{fact}{\bf Fact}[section]
\theoremstyle{definition}
\theoremstyle{remark}
\newtheorem{remark}{\bf Remark }[section]
\theoremstyle{definition}
\newtheorem{example}{\bf Example }[section]
\DeclareMathOperator{\Tr}{Tr}
\DeclareMathOperator{\sign}{sign}
\newcommand{\subscript}[2]{$#1   #2$}
\begin{document}
		\title{When does a  Periodic Response Exist in a Periodically Forced Multi-Degree-of-Freedom  Mechanical System?}

	\author{Thomas Breunung\footnote{Corresponding author, E-mail: brethoma@ethz.ch, Phone: +41 44 633 83 56  }~ and George Haller  }
	\affil{Institute for Mechanical Systems, ETH Z{\"u}rich, \protect\\  Leonardstrasse 21, 8092 Z{\"u}rich, Switzerland}
	\maketitle

	

	\begin{abstract}
While periodic responses of periodically forced dissipative nonlinear mechanical systems are commonly observed in experiments and numerics, their existence can rarely be concluded in rigorous mathematical terms. This lack of a priori existence criteria for mechanical systems hinders definitive conclusions about periodic orbits from approximate numerical methods, such as harmonic balance. In this work, we establish results guaranteeing the existence of  a periodic response  without restricting the amplitude of the forcing or the response. Our results provide a priori justification for the use of numerical methods for the detection of periodic responses. We illustrate on examples that each condition of the existence criterion we discuss is essential. 

\vspace{0.2cm}
{\it Keywords:} nonlinear oscillations,  periodic response, global analysis, harmonic balance, existence criterion
	\end{abstract}

\section{Introduction}

Nonlinear mechanical systems are generally assumed to approach a periodic orbit under external periodic forcing. While approximately periodic responses are commonly observed in numerical routines (eg. numerical time integration, numerical continuation or harmonic balance) and experiments, concluding the existence of periodic response rigorously in a nonlinear system is more delicate.

For nonlinear system close to a solvable limit, perturbation methods remain a powerful tool to compute  periodic responses. Among these, the method of averaging requires slowly varying amplitude equations (cf. Sanders et al.~\cite{Verhulst_Avg}), while the method of multiple scales (cf. Nayfeh~\cite{Nayfeh_Perturb}) assumes evolution on different time scales generated by small parameters. The method of normal forms (cf. Murdock~\cite{Murdock_NF}) introduces a series of smooth coordinate changes to approximate and simplify the essential dynamics in a Taylor series in a small enough neighborhood of an equilibrium. Due to the truncation of infinite series arising in these procedures, the approximate dynamics remains valid only for sufficiently small values of an underlying perturbation parameter. How small that parameter is required to be is generally unclear, and hence the relevance of the results obtained from perturbation procedures under physically relevant parameter values is a priori unknown.

Rigorous numerics (cf. van den Berg and Lessard~\cite{Berg_RigorousNumerics}), estimating the ignored tail of Taylor expansions, is only applicable to specific numerical examples. In applications, therefore, one typically employs an additional numerical method to verify the predictions of perturbation methods. This is clearly not optimal.  

Due to the broad availability of effective numerical packages, numerical time-integration is often used to compute  periodic responses.  Modern, lightly damped engineering structures, however, require long integration times to reach a steady state response. Furthermore, the observed  periodic response depends on the initial condition and unstable branches of the full set of periodic responses cannot be located in this fashion. 

More advanced numerical schemes, such as harmonic balance (cf. Mickens~\cite{Mickens_HB}) and numerical collocation (cf. Ascher et al.~\cite{Ascher_collo}), reformulate the underlying ordinary differential equations or boundary value problem into a set of algebraic equations by approximating the  periodic response in terms of a set of finite basis functions (e.g., polynomial or Fourier basis). Coupled with numerical continuation schemes, this approach enables the computation of periodic responses even for high-amplitude oscillations (cf. Dankowicz and Schilder~\cite{coco}). To justify this procedure a priori and estimate the error due to the truncation of the infinite-dimensional basis-function space, the existence of the periodic orbit would need to be established by an analytic criterion. While this can be guaranteed under small forcing or small nonlinearities by Poincar\'{e} map arguments, the problem remains unsolved for general, forced nonlinear mechanical systems.

Recently, Jain et al.~\cite{NPO} have proposed iterative methods to efficiently compute  periodic responses of periodically forced nonlinear mechanical systems without small parameter assumptions. Their existence criterion, derived via the Banach fixed-point theorem, however, fails for high forcing amplitudes and forcing frequencies in resonance with an eigenfrequency  of the linearized system. 

In the absence of small parameters, general fixed point theorems, such as Brouwer's or Schauder's fixed point theorem or the Leray-Schauder principle  are powerful tools to prove the existence of periodic orbits (cf. Bobylev et al.~\cite{Bobylev} or Precup~\cite{Precup} for a summary). Lefschetz~\cite{lefschetz1943existence},  for example, proved the existence of a periodic orbit for a specific one-degree-of-freedom, forced nonlinear mechanical system. His result, however, requires the damping force to be of the same order as the geometric nonlinearities. Therefore, his argument does not apply to common mechanical systems, such as the classic Duffing oscillator, with linear damping. 

This restriction on the damping was relaxed significantly by Lazer~\cite{Lazer_Schauder}.  Based on Schauder's fixed point theorem, Lazer's result allows for linear damping but requires a growth restriction on the nonlinearity. This result was further strengthened and extended to higher dimensions by Mawhin~\cite{Mawhin_LazerExt}, who  required the damping simply to be differentiable. Both results, however,  restrict the growth of the nonlinearties to be less than linear for sufficiently high displacements, i.e., exclude polynomial or even linear stiffness forces. As Martelli~\cite{Martelli} noted, this restriction can be relaxed to a linear growth with sufficiently low slope, depending on the eigenfrequency of the system and the forcing frequency. Due to the growth restriction on the nonlinearities these results are inapplicable for simple polynomial nonlinearities.  

The results of Mawhin and Lazer have been extended to nonsmooth systems  by Chu et al.~\cite{Chu_singular} and Torres~\cite{torres2003existence} and  to more complex differential operators (cf. Mawhin~\cite{Mawhin2001}),  relying on an extension of the generalized continuation theorems by Gaines and Mawhin~\cite{Mahwin_Cidx} and Man\'{a}sevich and Mawhin~\cite{mawhin_pLaplacian}. Furthermore,  Antman and Lacabonara~\cite{Antman2009} give an existence criterion for periodic solutions based on the principle of guiding functions (cf. Krasnosel'skij~\cite{Krasnos_guidingFunc}). This result, however, relies on the specific form of the nonlinearity and external forcing for shells.

A general existence criterion for periodic orbits in second-order differential equations with linear dissipation  can be found in the work of  Rouche and Mawhin~\cite{Rouche_PO}. Their result implies  the existence of a periodic response for dissipative nonlinear mechanical systems for arbitrary large forcing amplitudes. It appears, however, that the results in~\cite{Rouche_PO} are not known in the mechanical vibrations literature. 

In this paper, we refine the Rouche-Mawhin results to be directly relevant for mechanical systems. This gives a general sufficient criterion for the existence of periodic orbits in multi-degree-of-freedom forced-damped nonlinear mechanical systems, without any restriction on the magnitude of the forcing or vibration amplitude.  We also give mechanically relevant examples of periodically forced systems violating our criterion in which a periodic response does not exist. These show that the assumptions in our results are indeed relevant for mechanical systems and cannot be individually omitted without loosing the conclusion. Further, we identify a broad family  of nonlinear mechanical systems for which our theorem guarantees the existence of a periodic response. This result enables the rigorous computation of periodic orbits for a large class of strongly nonlinear mechanical systems.

\section{Problem statement}

We consider a general~$N$-degree-of-freedom mechanical system of the form
\begin{equation}
\mathbf{M}\ddot{\mathbf{q}}+\mathbf{C}\dot{\mathbf{q}}+\mathbf{S}(\mathbf{q})= \mathbf{f}(t), \qquad \mathbf{f}(t+T)=\mathbf{f}(t) ,\qquad \mathbf{q}\in \mathbb{R}^N,
\label{eq:sys0}
\end{equation}
where the mass matrix~$\mathbf{M}\in \mathbb{R}^{N\times N}$ is positive definite and~$\mathbf{C}\in \mathbb{R}^{N\times N}$ denotes the damping matrix. The geometric nonlinearities~$\mathbf{S}(\mathbf{q})$ contain all position-dependent forces, including potential and non-potential forces, such as follower forces~\cite{Bolotin}. We also assume that~$\mathbf{S}(\mathbf{q})$ to be  continuous in its arguments.  The external forcing~$\mathbf{f}$ is assumed to be~$T$-periodic and continuous.   

To emphasize the importance of rigorous existence criteria for periodic orbits of nonlinear mechanical systems, we next present two examples for which the popular harmonic balance procedure leads to  wrong conclusions. First, addressing the  common belief that periodic forcing always leads to a periodic response of system~\eqref{eq:sys0}, we start with an example illustrating the contrary. Furthermore, we demonstrate numerically that the harmonic balance method yields false results on this example. Secondly, we demonstrate that for a linear system the harmonic balance procedure can  predict an inaccurate periodic orbit.

\subsection{Motivating examples}
\label{sec:counter_ex1}

In the harmonic balance procedure, the equation of motion of the dynamical system~\eqref{eq:sys0} is evaluated along an assumed periodic orbit of the form 
\begin{equation}
	\mathbf{q}^*(t)=\frac{\mathbf{c}_0}{2}+
	\sum_{k=1}^{K} 
	\left[
	\mathbf{s}_k\sin(k\Omega t)+\mathbf{c}_k\cos(k\Omega t)
	\right],\qquad
	 \mathbf{s}_k,\mathbf{c}_k\in\mathbb{R}^N.
	\label{eq:HB_sol}
\end{equation}
Next, the forcing term~$\mathbf{f}(t)$ and the nonlinearity evaluated along the postulated periodic orbit,~$\mathbf{S}(\mathbf{q}^*(t))$, are projected on the first~$K$ Fourier modes and higher modes are ignored. As a result, one obtains a finite set of nonlinear algebraic equations for the unknown constants~$\mathbf{c}_k$ and~$\mathbf{s}_k$. This set of equations can generally not be solved analytically, therefore iterative methods, such as a Newton-Raphson iteration, are employed to generate an approximate solution.  For more details, we refer to Mickens~\cite{Mickens_HB}. 

Besides the a priori assumption of the existence of a periodic orbit, the truncation of the periodic orbit~\eqref{eq:HB_sol} at some finite order~$K$ needs to be justified. Classic results (cf. Bobylev et al.~\cite{Bobylev} or Leipholz~\cite{leipholz1977direct}) show that this truncation can be justified for a sufficiently large~$K$ when a periodic orbit of system~\eqref{eq:sys0} actually exists. If the existence of a periodic orbit cannot  be guaranteed a priori, conditions derived by Urabe~\cite{urabe} or Stokes~\cite{stokes} might guarantee the existence of a periodic orbit close to the harmonic balance approximation. These conditions, however, can only be evaluated a posteriori, as they rely on the harmonic balance solution itself. Notably, Kogelbauer et al.~\cite{Kogelbauer_HB} strengthen the results of Urabe~\cite{urabe} and Stokes~\cite{stokes}, and provide an explicitly verifiable condition for the existence of a periodic orbit. In practice, however, their conditions restrict the forcing and response amplitudes to small values. 

To illustrate issues that can arise with harmonic balance, we consider the  two degree-of-freedom oscillator 
\begin{equation}
	\begin{bmatrix}
		m_1 & 0\\
		0 & m_2
	\end{bmatrix}
	\ddot{\mathbf{q}}
	+
	\begin{bmatrix}
		c_1+c_2 & -c_2\\
		-c_2 &c_1+c_2
	\end{bmatrix}
	\dot{\mathbf{q}}
	+
	\begin{bmatrix}
		k_1+k_2 & -k_2\\
		-k_2 &k_1+k_2
	\end{bmatrix}
	\mathbf{q}
	+\kappa
	\begin{bmatrix}
		q_1^2+q_2^2 \\
		q_1^2+q_2^2 
	\end{bmatrix}
	=
	\begin{bmatrix}
		f_1 \\
		f_2 
	\end{bmatrix}.
	\label{eq:eqm_counter1}
\end{equation}
The nonlinearities assumed in system~\eqref{eq:eqm_counter1} may arise in a Taylor series approximation of a more complex nonlinear forcing vector, which is terminated at second order. Specifically, quadratic nonlinearities  arise in the modeling of ship capsize~(cf. Thompson~\cite{thompson_cap}),  ear drums (cf. Mickens~\cite{Mickens_Intro_NL}) and shells (cf. Antman and Lacabonara~\cite{Antman2009}). Touz\'{e} et al.~\cite{touze_springsys_cons} study a spring-mass system in which quadratic nonlinearities arise naturally due to the geometry. 

We assume forcing in the form of a triangular wave
\begin{equation}
	f_1(t)=-f_2(t)=\frac{2 f_m}{\pi}\int_0^t\sign(\cos(\Omega s))ds=f_m\frac{8}{\pi^2}\sum_{k=0}^{\infty}(-1)^k\frac{\sin((2k+1)\Omega t)}{(2k+1)^2},
	\label{eq:trigwave}
\end{equation}
where the parameter~$f_m$ denotes the amplitude and~$\Omega=2\pi/T$ the excitation frequency. For the remaining parameters, we assume the following non-dimensional numerical values
\begin{equation}
	m_1=m_2=1,\quad k_1=1,\quad k_2=4,\quad c_1=0.001,\quad c_2=0,\quad f_m=0.01178,\quad \Omega=1,\quad \kappa=1.
	\label{eq:pars_counter1}
\end{equation}
We apply the harmonic balance to system~\eqref{eq:eqm_counter1} with the parameters~\eqref{eq:pars_counter1} and forcing~\eqref{eq:trigwave}. We solve the resulting algebraic system of equations with a Newton-Raphson iteration, whereby we evaluate the nonlinearity in the time domain and transform the time signal to the frequency domain using fast Fourier transforms (cf. Cameron and Griffin~\cite{cameron_AFT}). Following  common practice (cf. Cochelin and Vergez~\cite{cochelin2009high}), we start with a low number of harmonics and successively increase the number $K$ of harmonics in ansatz~\eqref{eq:HB_sol} until the resulting oscillation amplitude appears converged, i.e., does not change with increasing~$K$. We depict the result of this harmonic balance procedure in Fig.~\ref{fig:C1}. 
\begin{figure}[ht!]
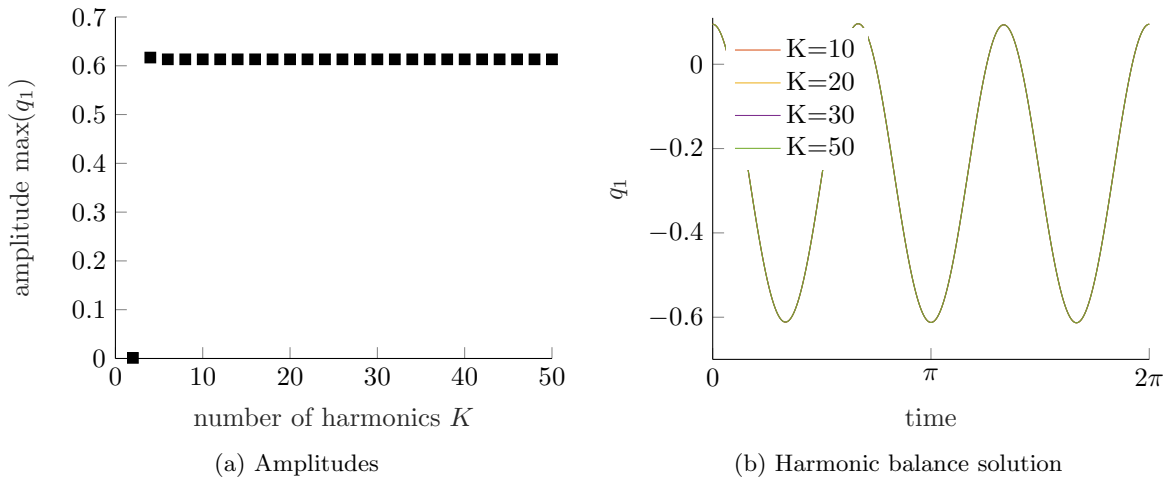

	\begin{center}
		\begin{subfigure}[b]{0.49\textwidth}
%
			\caption{Harmonic balance solution}
			\label{fig:C1_tseries}
		\end{subfigure}
	\end{center}
	\caption{Amplitudes and  time series obtained with the harmonic balance procedure for the mechanical system~\eqref{eq:eqm_counter1} with parameters~\eqref{eq:pars_counter1} and forcing~\eqref{eq:trigwave}.}
	\label{fig:C1}
\end{figure}

From the amplitudes depicted in Fig.~\ref{fig:C1_apmls}, one would normally conclude the converge of the displacement amplitude of the first coordinate to a value of about~$0.6$. Also, the time series for the choices of $K$, shown in Fig.~\ref{fig:C1_tseries}, are practically indistinguishable.  There is,  therefore, every indication that the harmonic balance method has correctly identified a periodic orbit for system~\eqref{eq:eqm_counter1}.

The periodic orbit suggested by the numerical result in Fig.~\ref{fig:C1}, however, does \textit{not exist} in system~\eqref{eq:eqm_counter1} for the  parameters~\eqref{eq:pars_counter1} and the forcing~\eqref{eq:trigwave}. More generally, in Appendix~\ref{app:C1} we prove that if the amplitude $f_m$ of the forcing~\eqref{eq:trigwave} is above a certain threshold, no periodic orbit exists for system~\eqref{eq:eqm_counter1}. Since the value of the forcing amplitude~\eqref{eq:pars_counter1} is above this threshold, the periodic orbit indicated by the harmonic balance procedure in Fig.~\ref{fig:C1} does not actually exists.

One might argue that due to the discontinuity of the forcing~\eqref{eq:trigwave}, the assumption of a twice differentiable solution is not justified. Indeed, due to the Lipschitz continuity of the forcing~\eqref{eq:trigwave} just the existence and uniqueness of a local solution can be guaranteed by Picard's theorem (cf. Coddington~and Levinson~\cite{Coddington}). Our nonexistence proof, however, relies only on the fact that the amplitudes of the forcing is above a certain threshold. One can therefore easily replace the forcing~\eqref{eq:trigwave} by a smoother, even analytic alternative and obtain the same conclusion. 

The unforced limit of system~\eqref{eq:eqm_counter1} has two fixed points which are connected through a homoclinic orbit. As Thompson and Steward~\cite{Thompson_NLC} observe,  these features can give rise unbounded (escape) behavior for the forced system. For small enough forcing amplitudes, the existence of the periodic orbit is guaranteed by the general results of Haro and de Llave~\cite{Haro_tori}. For larger forcing amplitudes, however, the result of Haro and de Llave~\cite{Haro_tori} cannot guarantee the existence of a periodic orbit and exceeding the threshold~\eqref{eq:f_thres} rules out the possibility of any periodic motion.

As we will see shortly, the crucial reason for the nonexistence of a periodic orbit in the above example is the form of the nonlinearity. Indeed, for a simple system with a single quadratic nonlinearity, Thompson and Steward~\cite{Thompson_NLC} were unable to continue a periodic orbit numerically for arbitrarily high forcing amplitudes. Difficulties in applying harmonics balance to systems with quadratic nonlinearities have also lead to the practical guidelines by Mickens~\cite{mickens_HBguide}, who heuristically restricts the harmonics balance procedure to systems with odd nonlinearities.

Next, we give an example for which the harmonic balance procedure yields an inaccurate periodic orbit, due its unavoidable truncation of the basis function space. We consider the linear forced-damped oscillator
\begin{equation}
\ddot{q}+c\dot{q}+kq=f(t), \qquad k=400,~~ c=0.01,
\label{eq:lin_sys}
\end{equation}
with a forcing shown in Fig.~\ref{fig:C2_forcing}. This forcing is clearly dominated by a fundamental harmonic. The harmonic balance procedure for the choice of five harmonics yields a periodic orbit, which is dominated by the fundamental harmonic with an amplitude at about $0.0025$ (c.f. Fig.~\ref{fig:C2_tseries}). Increasing the number of harmonics considered to ten and fifteen, we obtain the periodic orbits in Fig.~\ref{fig:C2_tseries}, which are practically indistinguishable from the periodic orbit obtained from five harmonics. In practice, one would practically terminate the harmonic balance procedure and conclude the convergence of method to a periodic orbit with the maximal position equal to $0.0025$ in magnitude.

\begin{figure}[ht!]
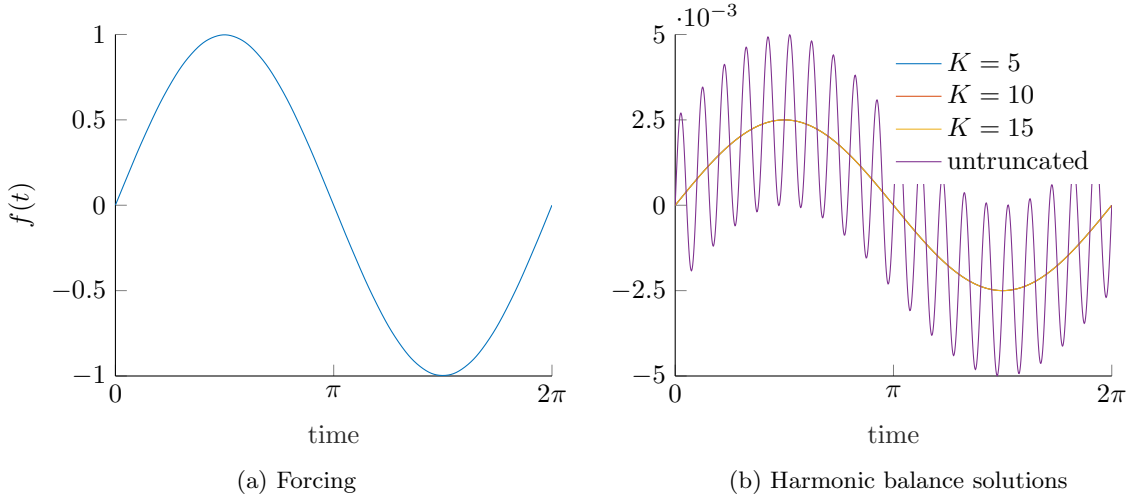

	\begin{center}
		\begin{subfigure}[b]{0.49\textwidth}
\definecolor{mycolor1}{rgb}{0.00000,0.44700,0.74100}%
%
			\caption{Harmonic balance solutions}
			\label{fig:C2_tseries}
		\end{subfigure}
	\end{center}
	\caption{Forcing and  periodic orbits obtained with the harmonic balance procedure for the linear oscillator~\eqref{eq:lin_sys}.}
	\label{fig:C2}
\end{figure}

The untruncated version of the periodic orbit depicted in Fig~\ref{fig:C2_tseries}, however, differs significantly from the approximation obtained from the harmonic balance procedure. The maximal position along the true periodic orbit is twice the value predicted with a low-order truncation. We note that the forcing $f(t)$ and the corresponding periodic orbit  $q^*(t)$ of system~\eqref{eq:lin_sys} are given by
\begin{equation}
 f(t)=\frac{1}{400}\left(399\sin(t)+0.01\cos(t)+0.2\cos(20t)\right),\qquad q^*(t)=0.0025(\sin(t)+\sin(20t)).
 \label{eq:lin_sol}
\end{equation}
 The periodic orbit~\eqref{eq:lin_sol} has a fundamental harmonic, which is identified correctly by the harmonic balance approximations (c.f. Fig~\ref{fig:C2_tseries}). The higher-frequency component, however,  is truncated in the harmonic balance procedure and is therefore not captured. Eventually, increasing the number of harmonics considered above twenty, we obtain the correct result from the harmonic balance, but there is no rigorous criterion that would indicate this in advance. Even in the case of infinite harmonics in ansatz~\eqref{eq:HB_sol}, the approximate periodic orbit from the harmonic balance procedure can differ significantly from the actual periodic orbit, as we demonstrate in Appendix~\ref{app:fejer}.

As we have illustrated, even in the case of an apparently convergent harmonic balance approximation the existence of a periodic orbit cannot be guaranteed.  Rigorous criteria for the existence of periodic orbits, however, can exclude false positives. 

\section{Existence of   a periodic response}

With the mean forcing defined as 
\begin{equation}
\bar{\mathbf{f}}=\frac{1}{T}\int_0^T\mathbf{f}(t)dt,
\label{eq:f_mean}
\end{equation}
 we will prove the following general result
\begin{theorem}
Assume that the forcing $\mathbf{f}(t)$ in system~\eqref{eq:sys0} is continuous and the following conditions hold:
\begin{enumerate}[label=(\subscript{C}{{\arabic*}})]
\item The damping matrix~$\mathbf{C}$  is definite, i.e., there exists a constant~$C_0\!>\!0$ such that
\begin{equation}
|\mathbf{x}^T\mathbf{C}\mathbf{x}|>C_0|\mathbf{x}|^2, \qquad \mathbf{x}\in\mathbb{R}^N.
\label{eq:damping_cond}
\end{equation}
\label{cond:damping}
\item The geometric nonlinearities derive from a potential, i.e., there exists a continuously differentiable scalar function~$V(\mathbf{q})$ such that 
\begin{equation}
\mathbf{S}(\mathbf{q})=\frac{\partial V(\mathbf{q})}{\partial \mathbf{q}}. 
\label{eq:potential}
\end{equation}
\label{cond:potential}
\item For each degree of freedom, the quantity $q_j(S_j(\mathbf{q})-\bar{f}_j)$ has a constant, nonzero sign far enough from the origin.  Specifically,  there exists a distance $r\!>\!0$ and an integer~$1\leq n \leq N$ such that
\begin{equation}
\begin{split}
q_j\left(S_j(\mathbf{q})-\bar{f}_j \right) &>0, \qquad |q_j|>r,~~j=1,...,n, 
\\
q_j\left(S_j(\mathbf{q})-\bar{f}_j \right)&<0, \qquad |q_j|>r,~~j=n+1,...,N. 
\end{split}
\label{eq:sign_cond}
\end{equation} 
\label{cond:sign_cond}
\end{enumerate}
Then system~\eqref{eq:sys0} has a twice continuously differentiable periodic orbit. 
\label{thm:Existence}
\end{theorem}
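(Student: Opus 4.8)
The plan is to cast the search for a $T$-periodic solution as a coincidence problem $L\mathbf{q}=N\mathbf{q}$ and to apply the Mawhin continuation theorem in the form of Gaines and Mawhin~\cite{Mahwin_Cidx}. Working on the space of $T$-periodic continuous functions, I would set $L\mathbf{q}=\mathbf{M}\ddot{\mathbf{q}}+\mathbf{C}\dot{\mathbf{q}}$ with periodic boundary conditions and $N\mathbf{q}(t)=\mathbf{f}(t)-\mathbf{S}(\mathbf{q}(t))$. First I would check that $L$ is Fredholm of index zero: multiplying $L\mathbf{q}=0$ by $\dot{\mathbf{q}}^T$, integrating over a period, and using the definiteness~\ref{cond:damping} of $\mathbf{C}$ forces $\dot{\mathbf{q}}\equiv0$, so $\ker L$ is exactly the constants; a Fourier computation (again invoking~\ref{cond:damping} through the symmetric part of $\mathbf{C}$) shows $\mathrm{Im}\,L$ is the subspace of zero-mean functions, so $\dim\ker L=\mathrm{codim}\,\mathrm{Im}\,L=N$. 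With the associated projectors $P,Q$ and a compact right inverse, the Nemytskii operator $N$ is $L$-compact on bounded sets, and the degree-theoretic machinery applies.

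The heart of the proof is the $\lambda$-uniform a priori bound for solutions of the homotopy $\mathbf{M}\ddot{\mathbf{q}}+\mathbf{C}\dot{\mathbf{q}}=\lambda(\mathbf{f}(t)-\mathbf{S}(\mathbf{q}))$, $\lambda\in(0,1)$. I would first integrate over one period to obtain the \emph{mean balance} $\int_0^T\mathbf{S}(\mathbf{q})\,dt=T\bar{\mathbf{f}}$. Multiplying by $\dot{\mathbf{q}}^T$ and integrating, the inertial term vanishes by periodicity and the stiffness term $\int_0^T\dot{\mathbf{q}}^T\mathbf{S}(\mathbf{q})\,dt=\int_0^T\tfrac{d}{dt}V(\mathbf{q})\,dt$ vanishes thanks to the potential structure~\ref{cond:potential}. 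What survives is $\int_0^T\dot{\mathbf{q}}^T\mathbf{C}\dot{\mathbf{q}}\,dt=\lambda\int_0^T\dot{\mathbf{q}}^T\mathbf{f}\,dt$, and the definiteness~\ref{cond:damping} together with Cauchy--Schwarz yields the key estimate $\|\dot{\mathbf{q}}\|_{L^2}\le\|\mathbf{f}\|_{L^2}/C_0$, independent of $\lambda$. A Wirtinger-type inequality then bounds the oscillation $\|\mathbf{q}-\bar{\mathbf{q}}\|_\infty\le M$ by a constant $M$ depending only on $\mathbf{f},C_0,T$.

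To bound the mean $\bar{\mathbf{q}}$ itself I would invoke the sign condition~\ref{cond:sign_cond}: if $|\bar q_j|>r+M$ for some coordinate $j$, then $q_j(t)$ keeps the sign of $\bar q_j$ and stays beyond $r$ for all $t$, so $S_j(\mathbf{q}(t))-\bar f_j$ retains a fixed nonzero sign, contradicting $\int_0^T(S_j(\mathbf{q})-\bar f_j)\,dt=0$ from the mean balance. Hence $|\bar q_j|\le r+M$ for every $j$, giving a uniform $C^0$ bound, and solving the differential equation for $\ddot{\mathbf{q}}$ upgrades it to a uniform $C^1$ bound on a box $\Omega$ of radius $\rho>r+2M$. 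On $\ker L\cong\mathbb{R}^N$ the reduced map is $\mathbf{a}\mapsto\bar{\mathbf{f}}-\mathbf{S}(\mathbf{a})$, and~\ref{cond:sign_cond} is precisely a Poincar\'e--Miranda sign pattern on $\Omega$: a straight-line homotopy to the diagonal map $\mathbf{a}\mapsto(-a_1,\dots,-a_n,a_{n+1},\dots,a_N)$ stays nonzero on $\partial\Omega$, so the Brouwer degree equals $(-1)^n\neq0$. The continuation theorem then delivers a coincidence point, i.e.\ a $T$-periodic solution, and continuity of $\mathbf{S}$ and $\mathbf{f}$ makes $\ddot{\mathbf{q}}=\mathbf{M}^{-1}(\mathbf{f}-\mathbf{C}\dot{\mathbf{q}}-\mathbf{S}(\mathbf{q}))$ continuous, so the orbit is twice continuously differentiable.

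I expect the main obstacle to lie in the a priori estimate, and within it the passage from the clean $L^2$ velocity bound to a genuine $C^1$ bound on $\mathbf{q}$. The velocity estimate uses~\ref{cond:damping} and~\ref{cond:potential} almost for free, but controlling the mean position requires the interplay of the mean balance with~\ref{cond:sign_cond}, and some care is needed to choose the oscillation constant $M$ and the box radius $\rho>r$ consistently, so that \emph{one and the same} open set $\Omega$ simultaneously contains all homotopy solutions and supports the nonvanishing-degree computation on $\ker L$.
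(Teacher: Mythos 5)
Your proposal is correct, but it reaches Theorem~\ref{thm:Existence} by a genuinely different route than the paper. The paper's proof (Appendix~\ref{app:Exist_proof}) is a pure reduction: it absorbs the mean forcing into the potential via $\tilde V(\mathbf{q})=V(\mathbf{q})-\mathbf{q}^T\bar{\mathbf{f}}$, premultiplies the equation by $\mathbf{M}^{-1}$, and then cites the Rouche--Mawhin existence theorem (Theorem~\ref{thm:RM}) for systems of the form~\eqref{eq:RM_sys}, checking that conditions~\ref{cond:damping}--\ref{cond:sign_cond} imply its three hypotheses. You instead rebuild the continuation argument from scratch on the original equation: $L\mathbf{q}=\mathbf{M}\ddot{\mathbf{q}}+\mathbf{C}\dot{\mathbf{q}}$ as an index-zero Fredholm operator with $\ker L$ the constants, the Gaines--Mawhin coincidence degree~\cite{Mahwin_Cidx}, the $\lambda$-uniform estimate $\|\dot{\mathbf{q}}\|_{L^2}\le C_f/C_0$, the mean balance $\int_0^T\mathbf{S}(\mathbf{q})\,dt=T\bar{\mathbf{f}}$ played against~\ref{cond:sign_cond} to bound the mean, and the Brouwer degree $(-1)^n\ne 0$ of the reduced map $\mathbf{a}\mapsto\bar{\mathbf{f}}-\mathbf{S}(\mathbf{a})$ on a coordinate box. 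This amounts to a self-contained reproof of the cited Rouche--Mawhin theorem in the generalized setting (mass matrix present, nonzero-mean forcing handled natively), and your a priori estimates coincide with the ones the paper derives separately for the amplitude bound in Appendix~\ref{app:bound_ampl}. What your route buys, beyond self-containedness, is that~\ref{cond:damping} is applied directly to $\mathbf{C}$ and~\ref{cond:sign_cond} directly to $\mathbf{S}$ in the original coordinates; the paper's reduction must instead assert that $\bar{\mathbf{C}}=\mathbf{M}^{-1}\mathbf{C}$ is definite and that the coordinate-wise sign condition survives the substitution $\bar V(\mathbf{q})=\tilde V(\mathbf{M}^{-1}\mathbf{q})$, and neither is automatic (the product of a symmetric positive definite matrix and a definite matrix need not define a sign-definite quadratic form, and~\eqref{eq:sign_cond} is not invariant under the linear mixing induced by $\mathbf{M}^{-1}$), so your formulation is actually the more robust one, at the price of verifying the Fredholm and $L$-compactness machinery that the citation lets the paper skip. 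Two details to nail down in a full write-up: the upgrade from the $L^2$ velocity bound to the uniform $C^1$ bound (each $\dot q_j$ vanishes somewhere by periodicity, and the differential equation bounds $\ddot{\mathbf{q}}$ pointwise by quantities already controlled, whence $\|\dot{\mathbf{q}}\|_{\infty}$ follows), and the fact that the Miranda-type degree computation really does need the sup-norm box you chose --- on a Euclidean ball the boundary only guarantees $\max_j|a_j|\ge\rho/\sqrt{N}$, which would force $\rho>r\sqrt{N}$.
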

\begin{proof}
We deduce this theorem from Theorem 6.3 by Rouche and Mawhin~\cite{Rouche_PO} after the removal of an unnecessary zero-mean forcing assumption in its original version. We detail the proof in Appendix~\ref{app:Exist_proof}. 
\end{proof}

\begin{remark}
As a consequence of the proof of Theorem~\ref{thm:Existence}, we obtain an upper bound on the amplitude of the existing periodic orbit. Specifically, with the squared~$L_2$-norm
\begin{equation}
C_f^2:=\int_0^T \mathbf{f}^T\mathbf{f} dt,
\label{eq:def_CF}
\end{equation}
of the forcing an estimate for the maximal oscillation amplitude is given by
\begin{equation}
\sup_{0\leq t\leq T} |\mathbf{q}| \leq \sqrt{N}\left(r+\sqrt{T}\frac{C_f}{C_0}\right),
\label{eq:bnd_ampls}
\end{equation}
where the constant~$C_0$ is defined in equation~\eqref{eq:damping_cond}. We detail the derivation of this estimate in Appendix~\ref{app:bound_ampl}. Our bound~\eqref{eq:bnd_ampls} is stricter than that obtained by Rouche and Mawhin~\cite{Rouche_PO}, who have an additional summand of $\sqrt{T}C_f/C_0$ in equation~\eqref{eq:bnd_ampls}. Further, the bound~\eqref{eq:bnd_ampls} confirms the intuition arising from linear theory, that the maximal response amplitude is proportional to the quotient of forcing amplitude and minimum damping coefficient. The inequality~\eqref{eq:bnd_ampls} confirms this intuition for the full nonlinear system without small-parameter assumptions.

For a single degree-of-freedom harmonic oscillator ($N\!=\!1$,~$r=0$, damping coefficient~$c$ and eigenfrequency $\omega_0$) and single harmonic forcing with amplitude~$f$ at resonance, i.e., for a system
\begin{equation}
\ddot{q}+c\dot{q}+\omega_0^2q=f\sin(\omega_0 t),
\label{eq:lin_osci}
\end{equation}
 the relationship between the bound~\eqref{eq:bnd_ampls} and the exact solution~$q_{lin}\!=\!fT/(2\pi c)$ is as follows: 
\begin{equation}
\sup_{0\leq t\leq T} |\mathbf{q}| \leq \sqrt{N}\left(r+\sqrt{T}\frac{C_f}{C_0}\right)
=\frac{fT}{2c}>\frac{fT}{2\pi c}=q_{lin}.
\label{eq:bnd_lin}
\end{equation}
As expected, the bound~\eqref{eq:bnd_lin} is conservative, but only by a factor of $\pi$.

\end{remark}
 Condition~\ref{cond:sign_cond} implies a sign change of the geometric nonlinearities minus the mean forcing component-wise inside the interval $[-r, r]$. In Fig.~\ref{fig:c3_illu}, we sketch graphs of three different geometric nonlinearities. If the value of the geometric nonlinearities  $S_j(\mathbf{q})$ is greater than the mean forcing for $q_j\!<\!-r$ (i.e. lies in the upper left dotted square of Fig.~\ref{fig:c3_illu}), then the quantity $(S_j(\mathbf{q})-\bar{f}_j)q_j$ evaluated for  $q_j\!<\!-r$ is negative. Therefore, for condition~\ref{cond:sign_cond} to hold, $(S_j(\mathbf{q})-\bar{f}_j)q_j$ for $q_j\!>\!r$ needs to be negative, which implies that the geometric nonlinearity  $S_j(\mathbf{q})$ needs to be  below $\bar{f}_j$. In Fig.~\ref{fig:c3_illu}, the graph of of $S_j(\mathbf{q})$ needs to end in the lower right dotted square.  Such a nonlinearity is depicted in blue in Fig.~\ref{fig:c3_illu}. Similarly, a geometric nonlinearity satisfying  $(S_j(\mathbf{q})-\bar{f}_j)q_j\!>\!0$ for all $|q_j|\!>\!r$ needs to be contained in the two shaded regions of Fig.~\ref{fig:c3_illu}. For the red curve, we have $(S_j(\mathbf{q})-\bar{f}_j)q_j\!<\!0$ for $q_j\!<\!-r$ and $(S_j(\mathbf{q})-\bar{f}_j)q_j\!>\!0$ for $q_j\!>\!r$ , i.e. condition~~\ref{cond:sign_cond} is not satisfied.

\begin{figure}[ht!]
	\begin{center}
		\begin{tikzpicture}


		
		
		\node[pattern=north east lines, color={black!20!white} ] at (-0.5,2) (LW) [minimum width=1cm,  minimum height=2cm] {};
		\node[pattern=north east lines,  color={black!20!white} ] at (4.5,3.75) (LW) [minimum width=1cm,  minimum height=1.5cm] {};
		
		\node[pattern=dots] at (4.5,2) (LW) [minimum width=1cm,  minimum height=2cm] {};
		\node[pattern=dots] at (-0.5,3.75) (LW) [minimum width=1cm,  minimum height=1.5cm] {};
		
		\draw[thick, color={black!60!blue}]  plot [smooth] coordinates { (-0.5,3.5) (1,4) (2,1.5) (3,3.5) (4.5,2.5)};
		
		\draw[thick, color={black!60!green}]  plot [smooth] coordinates { (-0.5,2.25) (1,4.25) (2,3.5) (3,4.5) (4.5,4)};

		\draw[thick, color={black!60!red}]  plot [smooth] coordinates { (-0.5,4.25) (1,2.25) (2,1.75) (3,1.5) (4.5,4.45)};

		\draw (4,2) node[below]{\colorbox{white}{$r$}};
		\draw (4,2)-- (4,1.8) ;
		
		\draw (-0.05,2) node[below]{\colorbox{white}{$-r$}};
		\draw (-0,2)-- (-0,1.8);
		\draw[->] (-1,2) -- (5,2) node[above]{\colorbox{white}{$q_j$}};
		\draw[->] (2,0.5) -- (2,4.5)node[left]{$S_j(\mathbf{q})$};
		\draw[thick] (-1,3) -- (5,3) node[above]{\colorbox{white}{$\bar{f}_j$}};

		\end{tikzpicture}
	\end{center}
	\caption{Illustration of condition~\ref{cond:sign_cond}: The green and blue curve satisfy condition~\ref{cond:sign_cond}, while the red curve violates condition~\ref{cond:sign_cond}. }
	\label{fig:c3_illu}
\end{figure}
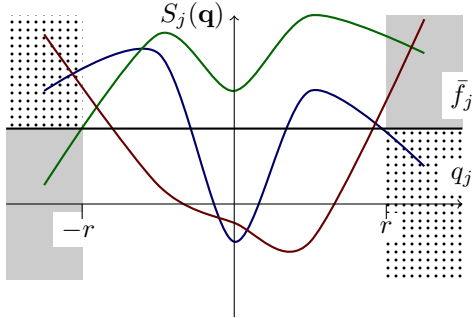

\begin{theorem}
	\label{thm:posdef}
	If the geometric nonlinearities~$\mathbf{S}(\mathbf{q})$ are differentiable, condition~\ref{cond:sign_cond} holds if
	\begin{enumerate}[label=(\subscript{C}{{\arabic* ^*}})]
		 \setcounter{enumi}{2}
		\item The Hessian of~$V(\mathbf{q})$ is definite  for~$|\mathbf{q}|\!>\!r^*$, i.e., for some constant $C_v\!>\!0$,
		\begin{equation}
		|\mathbf{x}^T\frac{\partial^2 V(\mathbf{q})}{\partial ^2 \mathbf{q}} \mathbf{x}|> C_v|\mathbf{x}|^2,\qquad \mathbf{x}\in\mathbb{R}^N,~~  |\mathbf{q}|\geq r^*.
		\end{equation}
		\label{cond:conv_cond}
	\end{enumerate} 	 
\end{theorem}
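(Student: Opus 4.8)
The plan is to read off condition~\ref{cond:sign_cond} from the definiteness of the Hessian by differentiating the potential of condition~\ref{cond:potential} one coordinate at a time. Writing $H(\mathbf{q})=\partial^2 V(\mathbf{q})/\partial\mathbf{q}^2$, condition~\ref{cond:potential} gives $\mathbf{S}=\partial V/\partial\mathbf{q}$, hence $\partial S_j/\partial q_j=H_{jj}(\mathbf{q})=\mathbf{e}_j^T H(\mathbf{q})\mathbf{e}_j$, where $\mathbf{e}_j$ is the $j$-th standard basis vector. First I would argue that the sign of the definiteness is globally fixed: on the connected set $\{|\mathbf{q}|\geq r^*\}$ the continuous field $H(\mathbf{q})$ is nonsingular (a definite matrix is invertible), and the strict bound in~\ref{cond:conv_cond} forbids $H(\mathbf{q})$ from carrying eigenvalues of both signs (otherwise $\mathbf{x}^T H\mathbf{x}$ would vanish for some unit $\mathbf{x}$ along a path joining a positive to a negative eigenvector), so the inertia of $H$ is locally constant and therefore constant. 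I take $H$ to be positive definite without loss of generality, the negative-definite case only flipping all signs and the roles of the index ranges in~\eqref{eq:sign_cond}. In particular $H_{jj}(\mathbf{q})>C_v$ for every $j$ and every $|\mathbf{q}|\geq r^*$.

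The core step is then a one-dimensional integration. I fix a degree of freedom $j$ and freeze the transverse coordinates $\hat{\mathbf{q}}=(q_k)_{k\neq j}$. For $q_j\geq r^*$ one has $|\mathbf{q}|\geq|q_j|\geq r^*$, so $\tfrac{d}{dq_j}S_j(\mathbf{q})=H_{jj}(\mathbf{q})>C_v$, and the fundamental theorem of calculus yields $S_j(\mathbf{q})\geq S_j|_{q_j=r^*}+C_v\,(q_j-r^*)$. Hence $S_j\to+\infty$ as $q_j\to+\infty$ and, by the symmetric estimate, $S_j\to-\infty$ as $q_j\to-\infty$. Consequently $S_j(\mathbf{q})-\bar f_j$ acquires the sign of $q_j$ once $|q_j|$ is large, i.e. $q_j\,(S_j(\mathbf{q})-\bar f_j)>0$, which is exactly~\eqref{eq:sign_cond} with $n=N$.

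The main obstacle is uniformity. The threshold in $q_j$ beyond which the sign settles depends, through the boundary value $S_j|_{q_j=r^*}=S_j(r^*,\hat{\mathbf{q}})$, on the transverse coordinates, and this dependence is driven by the off-diagonal entries $H_{jk}$, which definiteness alone does not control (a positive-definite Hessian need not be diagonally dominant). To extract the single radius $r$ demanded by~\eqref{eq:sign_cond} I would restrict attention to the range of coordinates that is actually relevant: the a priori amplitude bound~\eqref{eq:bnd_ampls} used in the proof of Theorem~\ref{thm:Existence} confines every candidate orbit to a fixed cube, so only a compact set of transverse values $\hat{\mathbf{q}}$ occurs and the continuous map $\hat{\mathbf{q}}\mapsto S_j(r^*,\hat{\mathbf{q}})$ is bounded there, collapsing the pointwise thresholds into one common $r$. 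An equivalent and more robust route, which is in fact what the degree-theoretic argument behind Theorem~\ref{thm:Existence} requires, is to pass to the radial consequence: integrating $H$ along the ray from the origin gives $\mathbf{q}^T(\mathbf{S}(\mathbf{q})-\bar{\mathbf{f}})\geq C_v|\mathbf{q}|^2-\mathcal{O}(|\mathbf{q}|)$, which is positive for all large $|\mathbf{q}|$ directly from~\ref{cond:conv_cond}. I expect the bookkeeping that ties this uniform coercivity back to the exact index partition of~\eqref{eq:sign_cond} to be the only genuinely delicate part of the argument.
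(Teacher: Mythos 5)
Your diagonal-integration step is correct as far as it goes: for a positive definite Hessian, $\partial S_j/\partial q_j=H_{jj}(\mathbf{q})>C_v$ whenever $|\mathbf{q}|\geq r^*$, and since every point of the segment from $(r^*,\hat{\mathbf{q}})$ to $(q_j,\hat{\mathbf{q}})$ has norm at least $r^*$, you indeed get $S_j(\mathbf{q})\geq S_j(r^*,\hat{\mathbf{q}})+C_v(q_j-r^*)$ for $q_j\geq r^*$. But the argument stops exactly at the point you flag, and neither of your proposed repairs closes it. The first is circular: condition~\ref{cond:sign_cond} is a hypothesis on $\mathbf{S}$ over all of $\mathbb{R}^N$ that must hold \emph{before} Theorem~\ref{thm:Existence} produces any orbit, whereas the bound~\eqref{eq:bnd_ampls} is a consequence of that theorem and itself depends on the radius $r$ you are trying to construct; you cannot use it to shrink the set on which~\eqref{eq:sign_cond} has to be verified. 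The second repair establishes the radial estimate $\mathbf{q}^T(\mathbf{S}(\mathbf{q})-\bar{\mathbf{f}})\geq C_v|\mathbf{q}|^2-\mathcal{O}(|\mathbf{q}|)$, which is strictly weaker than the component-wise condition~\eqref{eq:sign_cond} and does not imply it, so the ``bookkeeping'' you defer is not bookkeeping.

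In fact the obstacle you identified is fatal: with condition~\ref{cond:sign_cond} quantified as written (transverse coordinates completely free), the implication claimed in Theorem~\ref{thm:posdef} is false. Take $N=2$, $V(\mathbf{q})=\frac{1}{2}(q_1^2+q_2^2)-Aq_1q_2$ with $0<A<1$ and zero-mean forcing. The Hessian is constant with eigenvalues $1\pm A$, so condition~\ref{cond:conv_cond} holds for every $r^*>0$ with $C_v=(1-A)/2$; yet $S_1(\mathbf{q})=q_1-Aq_2$, and for any $r>0$ the quantity $q_1S_1(\mathbf{q})$ is positive at $(r+1,0)$ and negative at $(r+1,\,2(r+1)/A)$, so no admissible $r$ exists. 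The uncontrolled off-diagonal coupling you pointed to is precisely what breaks the claim, and the paper's own proof in Appendix~\ref{app:convex} founders on the same rock: in~\eqref{eq:r_estimate_ej} the transverse sum, whose base point is $\sign(q_j)r^*\mathbf{e}_j$, is silently identified with the sum~\eqref{eq:r_est_notej} based at $q_j\mathbf{e}_j$, and the final step extracts the lower bound $C_v(q_j-\sign(q_j)r^*)^2$ for the $j$-th term alone, although the Hessian bound only controls the full quadratic form and discarding the positive transverse part runs the inequality the wrong way --- in the example above that $j$-th term is negative for large $q_2$. So your instinct that the uniformity question is ``the only genuinely delicate part'' is exactly right; it is where both your argument and the paper's collapse, and the statement can only be rescued by weakening the quantifier in condition~\ref{cond:sign_cond} (e.g., to a Poincar\'{e}--Miranda-type condition with the transverse coordinates confined to a box, which is what the paper's linear and oscillator-chain examples actually satisfy).
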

\begin{proof}
See Appendix~\ref{app:convex} for a proof.	
\end{proof}

\begin{remark}
Condition~\ref{cond:conv_cond} is more restrictive than condition~\ref{cond:sign_cond}. For example, consider the potential
	\begin{equation}
	V(q_1,q_2)=k_1 q_1^2+k_2 q_2^2,
	\label{eq:saddle_NL}
	\end{equation}
	which satisfies~\ref{cond:conv_cond} only if~$k_1$ and~$k_2$ have the same sign ($k_1k_2>0$), while it satisfies condition~\ref{cond:sign_cond} for any non-zero~$k_1$ and~$k_2$.  However, condition~\ref{cond:conv_cond} is more intuitive as it restricts the global geometry to be cup-shaped sufficiently far from the origin. In addition, condition~\ref{cond:conv_cond} is generally easier to verify, since positive or negative definiteness of the Hessian can be verified through direct eigenvalue computation, the leading minor criterion (i.e. Sylvester's criterion) or the Cholesky decomposition (cf.~Horn and Johnson~\cite{Horn_matrix}).   
\end{remark}

Theorems~\ref{thm:Existence} and~\ref{thm:posdef}  can guarantee the existence of periodic orbits for arbitrary large forcing and response amplitudes. These theorems, therefore, enable an a priori justification of the use of otherwise  heuristic approaches, such as harmonic balance or numerical collocation. In the following, we demonstrate the use of these theorems on various mechanical systems.

\section{Examples}

First, we illustrate via mechanically relevant examples that conditions~\ref{cond:damping}-\ref{cond:sign_cond} of Theorem~\ref{thm:Existence} cannot be~omitted without replacement by some other requirement. Next, we identify a large class of high-dimensional mechanical systems for which the existence of periodic orbits can be guaranteed by Theorems~\ref{thm:Existence} and~\ref{thm:posdef}. 

\subsection{The importance of condition~\ref{cond:damping}-\ref{cond:sign_cond}}

In the following we show that if one of the conditions~\ref{cond:damping}-\ref{cond:sign_cond} is violated, one can find mechanical systems with no periodic orbits that nevertheless satisfy the remaining conditions of Theorem~\ref{thm:Existence}. This underlines the importance of conditions~\ref{cond:damping}-\ref{cond:sign_cond}. 

\begin{example}[Zero-damping]
All solutions in a one-degree-of-freedom, undamped linear oscillator grow unbounded when the oscillator is forced at resonance. As a consequence, no periodic orbits may exists in such a system. Indeed, any undamped linear oscillator violates condition \ref{cond:damping}, because damping matrix is identically zero and therefore neither positive nor negative definite.  
\end{example}

\begin{example}[Non-potential nonlinearities]
We have seen in section~\ref{sec:counter_ex1} that system~\eqref{eq:eqm_counter1} has no \mbox{$T$-periodic} orbit. Indeed, trying to apply Theorem~\ref{thm:Existence} to this problem, we find that condition~\ref{cond:potential} is not satisfied.  To examine condition~\ref{cond:sign_cond}, we evaluate the quantity $S_j(\mathbf{q})-\bar{f}_j$ along $q_2=0$. This parabola opens upward and is positive outside a closed interval, i.e., 
\begin{equation}
((k_1+k_2)q_1+\kappa q_1^2-\bar{f}_1)>0, \qquad q_1>\sqrt{\frac{|\bar{f}_1|}{\kappa}},~~\text{or} ~~  q_1<-\frac{k_1+k_2}{\kappa}-\sqrt{\frac{|\bar{f}_1|}{\kappa}}.
\end{equation}
Therefore, the quantity $q_1((k_1+k_2)q_1+\kappa q_1^2-\bar{f}_1)$ is positive for all $q_1\!>\!\sqrt{\frac{|\bar{f}_1|}{\kappa}}$ and negative for $q_1\!<\!-\frac{k_1+k_2}{\kappa}-\sqrt{\frac{|\bar{f}_1|}{\kappa}}$. This implies that no constant $r$ exists such that $q_1((k_1+k_2)q_1-k_2q_2+\kappa(q_1^2+q_2^2)-\bar{f}_j)$ has a constant sign for all $|q_i|>r$, i.e. condition~\ref{cond:sign_cond} is violated. 

We now consider a slight modification of system~\eqref{eq:eqm_counter1} in the form of 
\begin{equation}
\begin{bmatrix}
1 & 0\\
0 & 1
\end{bmatrix}
\ddot{\mathbf{q}}
+
\begin{bmatrix}
c_1 & 0\\
0 & c_2
\end{bmatrix}
\dot{\mathbf{q}}
+
\begin{bmatrix}
\omega_1^2 & 0\\
0 &\omega_2^2
\end{bmatrix}
\mathbf{q}
+\kappa
\begin{bmatrix}
q_1q_2^2 \\
0
\end{bmatrix}
=
\begin{bmatrix}
f_1(t) \\
a\sin(\Omega t)
\end{bmatrix},
\label{eq:eqm_counter3}
\end{equation}
which satisfies conditions~\ref{cond:sign_cond}, for the choice
\begin{equation}
\kappa>0,~~\bar{f}_1 \leq 0,\quad \Rightarrow q_1(q_1(\omega_1^2+\kappa q_2^2)-\bar{f}_1)>0,~~~~\text{for all } q_1,q_2\in \mathbb{R}. 
\label{eq:conv_cond_eval}
\end{equation} 
Condition~\ref{cond:damping} is also satisfied for positive damping values $c_1,c_2>0$. Condition~\ref{cond:potential}, however, is not satisfied as the geometric nonlinearities of system~\eqref{eq:eqm_counter3} do not derive from a potential. In the following, we will show that system~\eqref{eq:eqm_counter3} has no $T$-periodic orbits for an appropriately chosen set of parameters. 

Assuming the contrary, we consider a periodic solution~$\mathbf{q}^*(t)$ and solve the second equation in~\eqref{eq:eqm_counter3} to obtain 
\begin{equation}
q_2^*(t)=A(\omega_2,c_2,a,\Omega)\sin(\Omega t-\psi(\omega_2,c_2,a,\Omega)),
\label{eq:c3_q2tilde}
\end{equation}
 where the amplification factor~$A(\omega_2,c_2,a)$ and the phase shift~$\psi(\omega_2,c_2,a)$ are constants depending on the damping coefficient and eigenfrequency, as well as the forcing amplitude and frequency, as indicated. The exact form of $A$ and $\psi$ can be determined from linear theory (see, e.g., G\'{e}radin and Rixen~\cite{Rixen}). Substituting equation~\eqref{eq:c3_q2tilde} into the first equation in~\eqref{eq:eqm_counter3}, we obtain
 \begin{equation}
 \ddot{q}_1^*+c_1\dot{q}_1^*+q_1^*(\omega_1^2+\frac{\kappa A^2}{2}-\frac{\kappa A^2}{2} \cos(2\Omega t -2 \psi))=f_1(t),
 \label{eq:T_vary_stiffness}
 \end{equation}
which is a modification of classic forced-damped Matthieu equation (cf., Guckenheimer and Holmes~\cite{G+H} for the undamped-unforced limit, or Nayfeh and Mook~\cite{Nayfeh_Mook} for the unforced limit). Compared to the standard Matthieu equation, an additional term~$q_1^*\kappa A^2/2$ arises in equation~\eqref{eq:T_vary_stiffness}. For the unforced Matthieu equation ($f_1\!=\!0$), a change of stability of the trivial solution is commonly observed for various values of the damping, stiffness and forcing frequency.  Utilizing this observation, we can prove the nonexistence of a periodic orbit in system~\eqref{eq:eqm_counter3}  with the following fact:

  \begin{fact}
  \label{thm:PO_tvary}
  If  the trivial solution of the system
    \begin{equation}
  \ddot{q}_1^*+c_1\dot{q}_1^*+q_1^*( k_1+\frac{\kappa A^2}{2}-\frac{\kappa A^2}{2} \cos(2\Omega t -2 \psi))=0,
  \label{eq:mat_unforced}
  \end{equation}
  is unstable for some parameter values~$a$,~$\Omega$,~$c_2>0$,~$c_1>0$,~$\omega_1$,~$\omega_2$ and~$\kappa$, then we can find a~$T$-periodic forcing~$f_1(t)$ satisfying condition~\eqref{eq:conv_cond_eval}, such that system~\eqref{eq:T_vary_stiffness} has no periodic orbit.   
  \end{fact}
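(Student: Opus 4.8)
The plan is to exploit the fact that, for any $T$-periodic orbit of system~\eqref{eq:eqm_counter3}, the decoupled second equation forces $q_2^\ast$ to equal the unique damped steady state~\eqref{eq:c3_q2tilde}, so that \eqref{eq:T_vary_stiffness} reduces to a \emph{linear} scalar equation $\ddot q_1^\ast+c_1\dot q_1^\ast+p(t)q_1^\ast=f_1(t)$ with the known coefficient $p(t)=\omega_1^2+\tfrac{\kappa A^2}{2}-\tfrac{\kappa A^2}{2}\cos(2\Omega t-2\psi)$. Crucially, $p$ has period $\tilde T:=\pi/\Omega=T/2$, i.e. \emph{half} the forcing period. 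I would recast this in first-order form $\dot{\mathbf x}=B(t)\mathbf x+\mathbf F(t)$ with $\mathbf x=(q_1^\ast,\dot q_1^\ast)^T$, $\mathbf F=(0,f_1)^T$ and $B(t)=\left(\begin{smallmatrix}0&1\\-p(t)&-c_1\end{smallmatrix}\right)$, and invoke the standard solvability dichotomy for periodic linear systems: letting $M_T$ denote the monodromy matrix over one forcing period, a $T$-periodic solution exists for a given $f_1$ iff the forced state $\mathbf g=\int_0^T\Phi(T)\Phi(s)^{-1}\mathbf F(s)\,ds$ lies in $\operatorname{range}(I-M_T)$; this holds for \emph{every} $f_1$ precisely when $1\notin\operatorname{spec}(M_T)$, and otherwise fails for a suitable $f_1$. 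Since any $T$-periodic solution of \eqref{eq:eqm_counter3} yields a $T$-periodic solution of \eqref{eq:T_vary_stiffness}, it suffices to exhibit parameters and a forcing for which $1\in\operatorname{spec}(M_T)$ and $\mathbf g\notin\operatorname{range}(I-M_T)$.

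The key step is to show that the instability hypothesis pins down parameters at which $1$ is a Floquet multiplier over the full period $T$. I would work with the monodromy $M_{\tilde T}$ over the coefficient period $\tilde T$, using $M_T=M_{\tilde T}^2$, so that $1\in\operatorname{spec}(M_T)$ as soon as $\pm1\in\operatorname{spec}(M_{\tilde T})$. Because $\det M_{\tilde T}=\exp\!\big(-\int_0^{\tilde T}c_1\,dt\big)=e^{-c_1\tilde T}<1$ for $c_1>0$, the two multipliers have product strictly inside the unit disk, so a complex-conjugate pair \emph{on} the unit circle is impossible (its product would be $1$). Hence instability can arise only through \emph{real} multipliers, one of modulus exceeding $1$, and the passage from stability to instability must cross a real multiplier equal to $+1$ or $-1$. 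Taking a continuous path in parameter space from a manifestly stable configuration (e.g. $\kappa=0$ or $a=0$, where $p\equiv\omega_1^2$ and the damped oscillator is asymptotically stable) to the unstable configuration supplied by the hypothesis, while holding $c_1>0$ fixed, continuity of the spectral radius of $M_{\tilde T}$ and the intermediate value theorem produce a parameter set at which this radius equals $1$, i.e. at which a real multiplier is $\pm1$. There $\kappa>0$ still holds, and in either case $\rho^2=1\in\operatorname{spec}(M_T)$, with a genuine nontrivial $T$-periodic solution of the homogeneous equation.

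Finally, at these parameters the solvability obstruction is the single linear condition $\int_0^T w_2(t)f_1(t)\,dt=0$, where $\mathbf w=(w_1,w_2)$ is the nontrivial $T$-periodic solution of the adjoint system $\dot{\mathbf w}=-B(t)^T\mathbf w$; one checks directly from the adjoint equations $\dot w_1=p\,w_2$, $\dot w_2=-w_1+c_1w_2$ that $w_2\equiv 0$ would force $w_1\equiv0$, so $w_2\not\equiv0$ and the condition is nontrivial. The admissible forcings therefore form a hyperplane of codimension one in the space of continuous $T$-periodic functions, and since the mean constraint $\bar f_1\le0$ required by~\eqref{eq:conv_cond_eval} only cuts out a half-space, I can select an $f_1$ that simultaneously violates the solvability condition and satisfies $\bar f_1\le0$ (adjusting by an additive constant if necessary). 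For this $f_1$, equation~\eqref{eq:T_vary_stiffness}, and hence system~\eqref{eq:eqm_counter3} with~\eqref{eq:conv_cond_eval} in force, admits no $T$-periodic orbit.

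The hard part will be the continuation argument of the second step: making rigorous that a path from a stable to an unstable parameter configuration exists inside the admissible region ($c_1,c_2>0$, $\kappa>0$), that the amplitude $A$ and phase $\psi$ vary continuously along it, and that the spectral radius of $M_{\tilde T}$ genuinely crosses the value $1$ so that a real multiplier $\pm1$ is attained. The determinant identity $\det M_{\tilde T}=e^{-c_1\tilde T}<1$ is what makes this transparent by excluding Neimark--Sacker-type crossings, but one must still verify that the instability tongue of the damped Mathieu-type equation~\eqref{eq:mat_unforced} is actually reached within the parameter constraints, which is where a concrete choice of $a,\Omega,c_1,c_2,\omega_1,\omega_2,\kappa$ would be invoked.
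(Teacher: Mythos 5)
Your proposal is correct and takes essentially the same route as the paper's proof in Appendix~\ref{app:T_vary}: both use Liouville's theorem ($\det$ of the monodromy equals $e^{-c_1T/2}<1$) to exclude complex Floquet multipliers on the unit circle, a continuity argument from the stable $A=0$ limit to land on a stability-boundary configuration where a real multiplier equals $\pm 1$, and then violate the resulting adjoint-orthogonality (Fredholm/Farkas) condition with a forcing whose sign is fixed so that $\bar{f}_1\leq 0$. The only cosmetic differences are that you make the intermediate-value crossing explicit and choose the forcing abstractly (complement of a hyperplane intersected with a half-space), whereas the paper simply takes $f_1=\pm\tilde{y}_2$.
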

  \begin{proof}
  The proof relies on the fact that  a~$T$-periodic solution to system~\eqref{eq:T_vary_stiffness} does not exist if a non-trivial~$T$-periodic solution exists in the homogeneous system~\eqref{eq:mat_unforced} and additional  orthogonality conditions between the external forcing and non-trivial $T$-periodic solutions are violated (cf. Farkas~\cite{Farkas_PO}). In Appendix~\ref{app:T_vary}, we show the existence of non-trivial~$T$-periodic solutions~\eqref{eq:mat_unforced} and show that the orthogonality conditions are generally violated for appropriately chosen $f_1$.
  \end{proof}
  We can use the above fact to establish the nonexsistence of a periodic orbit for system~\eqref{eq:eqm_counter3}. To this end, we have to find a set of parameters for which the  trivial solution of  system~\eqref{eq:mat_unforced} is unstable. For the non-dimensional parameters
  \begin{equation}
  \omega_2=1,~~c_1=c_2=0.01,~~\kappa=1,~~\Omega=1,
  \label{eq:pars}
  \end{equation} 
   we calculate the monodromy matrix for the equilibrium at the origin using numerical integration, covering a parameter range for the forcing amplitude~$a$ and the eigenfrequency~$\omega_1$. We depict the result of the Floquet analysis performed on the monodromy matrix in Fig.~\ref{fig:Stab_map}, where we indicate the system configurations with stable trivial solution in green, while red indicates a system configuration with an unstable trivial solution. The critical  system configurations can be found on the stability boundary, which is highlighted in black in Fig.~\ref{fig:Stab_map}. As we prove in Appendix~\ref{app:T_vary}, for the black configurations, we can find a continuous~$T$-periodic forcing~$f_1$ such that the system~\eqref{eq:T_vary_stiffness}, and hence system~\eqref{eq:eqm_counter3}, has no periodic orbit. 
   
   \begin{figure}[ht!]
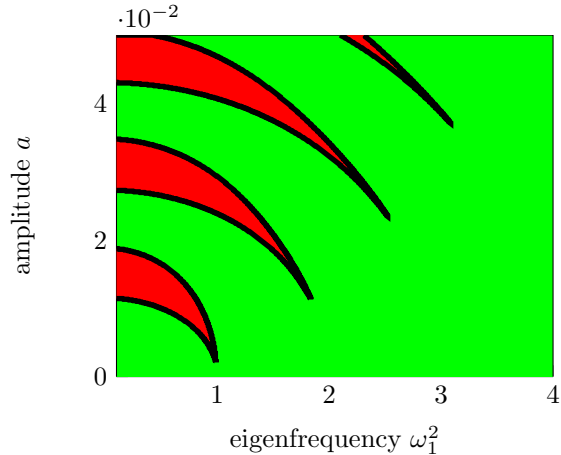

   	\begin{center}
%
   	\end{center}
   	\caption{Stability map of the trivial solution of system~\eqref{eq:mat_unforced} with parameters~\eqref{eq:pars}. Red denotes the instability of the trivial solution, while green denotes a stable trivial solution. Black lines indicate the stability boundary. At the latter parameter values one of the Floquet multipliers is equal to one in norm.}
   	\label{fig:Stab_map}
   \end{figure}
     
\end{example}

\begin{example}[geometric nonlinearities with global extrema]
Condition~\ref{cond:sign_cond} requires the sign of the quantities $q_j(S_j(\mathbf{q})-\bar{f}_j)$ to be constant and non-zero  for $|q_j|\!>\!r$. If the geometric nonlinearities  minus the mean forcing has a constant sign outside the region $|q_j|\!>\!r$ for some degree of freedom ($\sign(S_j(\mathbf{q}) - \bar{\mathbf{f}}_j)=const.$ for $|q_j|\!>\!r$), then the quantities  $q_j(S_j(\mathbf{q})-\bar{f}_j)$  evaluated for $q_j\!>\!r$  and for $q_j\!<\!-r$ have opposite sign. Therefore, condition~\ref{cond:sign_cond} is violated.  This is certainly the case if the geometric nonlinearities have a global minimum  value and the mean forcing of a single coordinate is below that minimum value, i.e.,
\begin{equation}
S_j(\mathbf{q})>S_{\min}>\bar{f}_l, \quad j=1,...,N, \quad 1\leq l\leq N, \qquad \mathbf{q}\in\mathbb{R}^N.
\label{eq:global_min}
\end{equation} 
Then $\left(S_l(\mathbf{q}) - \bar{\mathbf{f}}_l\right)$ is always positive and  $q_l\left(S_l(\mathbf{q}) - \bar{\mathbf{f}}_l\right)$ changes sign. For system~\eqref{eq:sys0} with geometric nonlinearities and mean forcing satisfying~\eqref{eq:global_min}, we have the following fact:
\begin{fact}
	\label{thm:global_min}
	If the geometric nonlinearities and the mean forcing of system~\eqref{eq:sys0} satisfy the conditions~\eqref{eq:global_min}, then no periodic orbit exists for system~\eqref{eq:sys0}. 
\end{fact}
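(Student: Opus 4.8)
The plan is to establish a necessary balance condition that any $T$-periodic orbit must satisfy, and then show that the lower bound~\eqref{eq:global_min} makes that condition impossible to meet. Arguing by contradiction, I would assume that system~\eqref{eq:sys0} admits a $T$-periodic orbit $\mathbf{q}^*(t)$; since $\mathbf{S}$ and $\mathbf{f}$ are continuous, such a classical solution is necessarily twice continuously differentiable, so all the integrals below are well defined. The central step is to project the equation of motion onto the distinguished coordinate $l$ and integrate over one full period, which isolates the time-averaged restoring force against the mean forcing.

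Concretely, integrating the $l$-th scalar component of~\eqref{eq:sys0} over $[0,T]$ gives
\[
\int_0^T \big[\mathbf{M}\ddot{\mathbf{q}}^*(t)+\mathbf{C}\dot{\mathbf{q}}^*(t)+\mathbf{S}(\mathbf{q}^*(t))\big]_l\,dt=\int_0^T f_l(t)\,dt=T\,\bar f_l,
\]
using the definition~\eqref{eq:f_mean} of the mean forcing. Because $\mathbf{M}$ and $\mathbf{C}$ are constant matrices, the inertial and damping contributions reduce to boundary terms,
\[
\int_0^T \big[\mathbf{M}\ddot{\mathbf{q}}^*\big]_l\,dt=\sum_{k=1}^N M_{lk}\big(\dot q_k^*(T)-\dot q_k^*(0)\big)=0,
\qquad
\int_0^T \big[\mathbf{C}\dot{\mathbf{q}}^*\big]_l\,dt=\sum_{k=1}^N C_{lk}\big(q_k^*(T)-q_k^*(0)\big)=0,
\]
both of which vanish by the $T$-periodicity of $\mathbf{q}^*$ and $\dot{\mathbf{q}}^*$. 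What survives is the clean identity $\tfrac{1}{T}\int_0^T S_l(\mathbf{q}^*(t))\,dt=\bar f_l$, stating that the time-average of the $l$-th geometric restoring force along any periodic orbit must equal the $l$-th mean forcing.

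The contradiction is then immediate: by~\eqref{eq:global_min} we have $S_l(\mathbf{q}^*(t))>S_{\min}$ for every $t$, so averaging preserves the strict lower bound and yields $\bar f_l=\tfrac{1}{T}\int_0^T S_l(\mathbf{q}^*(t))\,dt> S_{\min}>\bar f_l$, which is absurd. Hence no $T$-periodic orbit can exist. I expect no genuine obstacle in this argument; it is a short averaging computation. The only point requiring care is the justification that the inertial and damping averages vanish, which rests solely on $\mathbf{M}$ and $\mathbf{C}$ being \emph{constant} (so that integration commutes with the matrix multiplication) together with periodicity of the orbit and its velocity. It is worth emphasizing in the write-up that the conclusion uses only the single coordinate $l$ for which the uniform gap $S_{\min}-\bar f_l>0$ holds, mirroring the way condition~\ref{cond:sign_cond} fails in this regime.
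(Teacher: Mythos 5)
Your proof is correct and follows essentially the same route as the paper's: integrate the equation of motion over one period so that the inertial and damping terms vanish by periodicity, obtaining $\int_0^T\bigl(S_l(\mathbf{q}^*(t))-\bar f_l\bigr)\,dt=0$, and then observe that condition~\eqref{eq:global_min} makes this integrand strictly positive, a contradiction. The only cosmetic difference is that the paper invokes the mean-value theorem to produce a time $t^*_l$ with $S_l(\mathbf{q}^*(t^*_l))=\bar f_l$, whereas you conclude directly from the averaging inequality $\bar f_l>S_{\min}>\bar f_l$; these are equivalent phrasings of the same contradiction.
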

\begin{proof}
	We detail this proof in Appendix~\ref{app:global_extrm}.
\end{proof}
 \begin{remark}
	The conclusion of Fact~\eqref{thm:global_min} also holds in the case of geometric nonlinearities with global maxima  $S_{max}$ and a mean forcing larger than  $S_{max}$, i.e. for systems satisfying, 
	\begin{equation}
	S_j(\mathbf{q})<S_{\max}<\bar{f}_l, \quad j=1,...,N, \quad 1\leq l\leq N \qquad \mathbf{q}\in\mathbb{R}^N.
	\label{eq:global_max}
	\end{equation} 
	hold.
\end{remark}

An example for a nonlinearity satisfying~\eqref{eq:global_min} and~\eqref{eq:global_max}  is the simple pendulum,  whose   geometric nonlinearity~$S(q)\!=\!c_p\sin(q)$  has global maximum  and minimum value. Therefore, the damped forced-pendulum 
	\begin{equation}
	\ddot{q}+c\dot{q}+c_p\sin(q)=f(t),\qquad |\bar{f}|>|c_p|,
	\label{eq:pendulum}
	\end{equation}
	has no $T$-periodic solution. 
\end{example}

The previous example indicates that the mean value of the forcing plays a critical role in the existence of periodic orbits. One might wonder if a zero-mean restriction of the forcing, as in the theorem of Rouche and Mawhin~\cite{Rouche_PO} (cf. Theorem~\ref{thm:RM} in Appendix~\ref{app:Exist_proof}), allows relaxing some of our conditions, notably condition~\ref{cond:sign_cond}. In the following example, we show that even for zero-mean forcing ($\bar{f}_j=0$) condition~\ref{cond:sign_cond} cannot be relaxed.

\begin{example}[Constant-sign geometric nonlinearities, zero-mean forcing]
We consider the nonlinear oscillator
\begin{equation}
\ddot{q}+c\dot{q}+\omega^2q+\kappa q^2=f \cos(\Omega t),
\label{eq:eqm_x^2}
\end{equation}
the simplest example with  geometric nonlinearities  violating condition~\ref{cond:sign_cond}. As forcing, we choose simple single harmonic forcing with amplitude $f$. For system~\eqref{eq:eqm_x^2}, we have then the following fact:
\begin{fact}
	\label{thm:xsprt_NL}
If the forcing amplitude $f$ for mechanical system~\eqref{eq:eqm_x^2} is above the threshold 
\begin{equation}
|f|>\frac{ \omega^2}{ 2|\kappa|} \left(|-\Omega^2+\mathrm{i}c\Omega+\omega^2|+ 2\omega^2 \right)+|\kappa|\frac{\omega^4}{4\kappa^2},
\label{eq:c4_f_thres}
\end{equation}
then no $T$-periodic solution to system~\eqref{eq:eqm_x^2} exist.
\end{fact}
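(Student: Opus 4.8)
Suppose, for contradiction, that system~\eqref{eq:eqm_x^2} admits a $T$-periodic solution $q^*(t)$ with $T=2\pi/\Omega$; since the forcing $f\cos(\Omega t)$ and the quadratic restoring force are smooth, such a solution is automatically $C^2$ and has a convergent Fourier series. The plan is to show that the $L^2$-energy of the response is \emph{rigidly fixed, independently of $f$}, which caps the first-harmonic response the forcing can sustain. First I would complete the square, $\omega^2 q+\kappa q^2=\kappa\bigl(q+\tfrac{\omega^2}{2\kappa}\bigr)^2-\tfrac{\omega^4}{4\kappa}$, and pass to the shifted coordinate $u=q+\tfrac{\omega^2}{2\kappa}$, turning \eqref{eq:eqm_x^2} into $\ddot u+c\dot u+\kappa u^2=f\cos(\Omega t)+\tfrac{\omega^4}{4\kappa}$. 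This exposes the mechanism behind nonexistence already anticipated in the discussion of condition~\ref{cond:sign_cond}: the nonlinearity $\kappa u^2$ has one-sided sign, so its average cannot absorb an arbitrary constant term.

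The crucial step is to average the shifted equation over one period. By $T$-periodicity, $\langle\ddot u^*\rangle=\langle\dot u^*\rangle=0$ and $\langle\cos(\Omega t)\rangle=0$, where $\langle\cdot\rangle$ denotes the time mean, so the mean balance collapses to the exact, forcing-independent identity
\[
\langle (u^*)^2\rangle:=\frac{1}{T}\int_0^T (u^*)^2\,dt=\frac{\omega^4}{4\kappa^2}.
\]
This a priori $L^2$-bound (valid for either sign of $\kappa\neq 0$) is the heart of the argument: no matter how hard we force, the response energy is pinned, and with it the mean $|\bar u|\le\tfrac{\omega^2}{2|\kappa|}$, the first Fourier coefficient $\hat u_1:=\tfrac1T\int_0^T u^* e^{-\mathrm{i}\Omega t}\,dt$, and the first Fourier coefficient $\widehat{(u^2)}_1$ of the nonlinearity, all controlled through Cauchy--Schwarz and Parseval's identity.

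Next I would project the original equation \eqref{eq:eqm_x^2} onto the fundamental harmonic by multiplying by $e^{-\mathrm{i}\Omega t}$ and averaging. Integrating the inertial and damping terms by parts reproduces the linear transfer function $Z:=-\Omega^2+\mathrm{i}c\Omega+\omega^2$, so the projection reads $Z\,\hat q_1+\kappa\widehat{(q^2)}_1=\tfrac{f}{2}$, with $\hat q_1=\hat u_1$. Re-expressing the nonlinearity through the shifted variable, $\widehat{(q^2)}_1=\widehat{(u^2)}_1-\tfrac{\omega^2}{\kappa}\hat u_1$, and applying the triangle inequality together with the bounds furnished by the fixed energy $\langle(u^*)^2\rangle=\tfrac{\omega^4}{4\kappa^2}$ yields an upper estimate for $|f|$ of the same structure as the right-hand side of \eqref{eq:c4_f_thres}, namely a term proportional to $|Z|$, a cross term $\propto\omega^2$ coming from the shift, and a term $|\kappa|\langle(u^*)^2\rangle=\tfrac{\omega^4}{4|\kappa|}$. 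Whenever $f$ exceeds the threshold \eqref{eq:c4_f_thres}, this estimate is violated, contradicting the assumed existence of $q^*$.

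The main obstacle is the final estimate: controlling the first harmonic $\widehat{(q^2)}_1$ of the quadratic term sharply enough by the global $L^2$-energy to recover \emph{exactly} the constants in \eqref{eq:c4_f_thres}. The crude bounds $|\widehat{(q^2)}_1|\le\langle(q^*)^2\rangle$ and $|\hat u_1|\le\sqrt{\langle(u^*)^2\rangle}$ already give a threshold of the correct form, but a larger one; reaching the stated coefficients requires the sharper Parseval estimate $|\hat u_1|^2\le\tfrac12\bigl(\langle(u^*)^2\rangle-\bar u^2\bigr)$ and careful bookkeeping of the cross term $\tfrac{\omega^2}{\kappa}\hat u_1$ induced by the coordinate shift. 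I expect this constant-tracking, rather than any conceptual difficulty, to be where the real work lies, the conceptual content being entirely contained in the rigid energy identity of the second step.
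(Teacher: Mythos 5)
Your proposal is correct and follows essentially the same route as the paper's own proof in Appendix~\ref{app:xsprt_NL}: assume a $T$-periodic solution, average the equation over one period to pin down the $L^2$-norm of the response (your exact identity $\langle (u^*)^2\rangle=\omega^4/(4\kappa^2)$ is precisely the paper's relation $\frac{1}{T}\int_0^T\tilde q^2\,dt=-\frac{\omega^2}{\kappa}\bar q-\bar q^2\leq\frac{\omega^4}{4\kappa^2}$ rewritten after completing the square), bound the mean and the first Fourier coefficient via Cauchy--Schwarz and Parseval, project onto the fundamental harmonic, and use the triangle inequality to contradict the threshold~\eqref{eq:c4_f_thres}. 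The coordinate shift $u=q+\omega^2/(2\kappa)$ is a cosmetic repackaging of the paper's mean/oscillatory splitting $q^*=\bar q+\tilde q$, and if anything yields marginally sharper constants (the cross term contributes $\omega^2$ rather than $2\omega^2$), which only strengthens the nonexistence conclusion.
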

\begin{proof}
We detail the proof in Appendix~\ref{app:xsprt_NL} . 
\end{proof}
Therefore, choosing any forcing amplitude exceeding the threshold~\eqref{eq:c4_f_thres} will necessarily rule out the existence of a periodic orbit. 
\end{example}

\subsection{Examples with periodic orbits guaranteed by Theorem~\ref{thm:Existence}.}
In the following, we give examples in which Theorem~\ref{thm:Existence} guarantees the existence of a  periodic response. Since the damping condition~\ref{cond:damping} and the assumption ~\ref{cond:potential} on the geometric nonlinearities  derived from a potential are simple to verify, we focus on condition~\ref{cond:sign_cond} and \ref{cond:conv_cond}. We start with the classic Duffing oscillator and proceed with higher-dimensional examples.

\begin{example}[Duffing oscillator]
	The forced-damped Duffing oscillator is simple harmonic oscillator with an additional cubic nonlinearity added, i.e.
	\begin{equation}
	\ddot{q}+c\dot{q}+\omega^2q +\kappa q^3=f \cos(\Omega t), 
	\label{eq:duffing}
	\end{equation}
	where we have chosen single harmonic forcing with amplitude~$f$ and frequency~$\Omega$.

	For~$\kappa\!\geq \!0$, the potential of eq.~\eqref{eq:duffing} is positive definite for all~$q$ (cf. Fig.~\ref{fig:Duf_pot_pos}). Therefore, condition~\ref{cond:conv_cond} is trivially satisfied. Furthermore, condition~\eqref{eq:sign_cond} is satisfied for arbitrarily small radius~$r$, which can be used for the  upper bound on the amplitudes~\eqref{eq:bnd_ampls}. Therefore, both Theorems~\ref{thm:Existence} and~\ref{thm:posdef} apply and guarantee the existence of a  periodic solution without any numerics. We compute  periodic responses with the automated continuation package \textsc{coco}~\cite{coco} and show the amplitudes in Fig.~\ref{fig:Duf_FRF_pos}.

		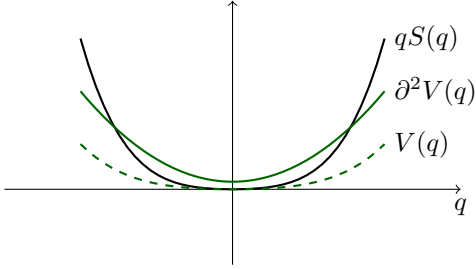
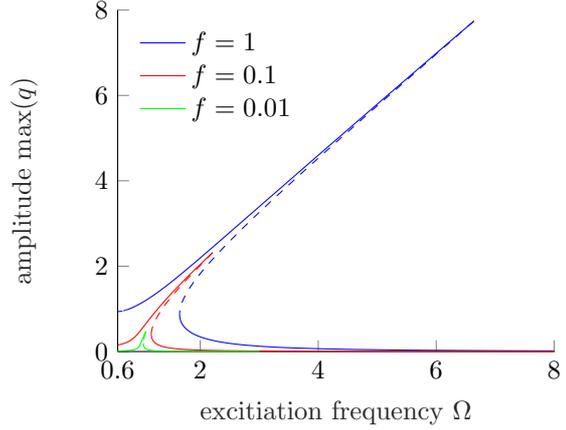
\begin{figure}[ht!]
		\begin{center}
			\begin{subfigure}[b]{0.49\textwidth}
		\begin{tikzpicture}
		\draw[->] (-1,2) -- (5,2) node[below]{$q$};
		\draw[->] (2,1) -- (2,4.5);
		
		\draw [ thick,  domain=-2:2, samples=40] 
		plot ({2+\x}, {2+\x*(0.1*\x+0.1*\x*\x*\x)} )node[right]{$qS(q)$};
		
		\draw [ thick, black!60!green,   domain=-2:2, samples=40] 
		plot ({2+\x}, {2+0.1+0.1*3*\x*\x} ) node[right]{\textcolor{black}{$\partial^2V(q)$}};

		\draw [ thick,dashed, black!60!green,   domain=-2:2, samples=40] 
		plot ({2+\x}, {2+(0.1/2*\x*\x+0.1/4*\x*\x*\x*\x)} ) node[right]{\textcolor{black}{$V(q)$}};
		
		\end{tikzpicture}
		
				\caption{Potential and nonlinearity for~$\kappa\geq 0$ of system~\eqref{eq:duffing}.
				\newline
				\newline}
				\label{fig:Duf_pot_pos}
			\end{subfigure}
			\begin{subfigure}[b]{0.49\textwidth}
			\begin{tikzpicture}
			
			\begin{axis}[%
			width=4.521in,
			height=3.566in,
			at={(0.758in,0.481in)},
			scale only axis,
			scale=0.5,
			unbounded coords=jump,
			xmin=0.6,
			xmax=8,
			xtick={0.6,   2,   4,   6,   8},
			xlabel style={font=\color{white!15!black}},
			xlabel={excitiation frequency $\Omega$},
			ymin=0,
			ymax=8,
			ytick={0, 2, 4, 6, 8},
			ylabel style={font=\color{white!15!black}},
			ylabel={amplitude $\max(q)$},
			axis background/.style={fill=white},
			axis x line*=bottom,
			axis y line*=left,
			legend style={at={(0.03,0.97)}, anchor=north west, legend cell align=left, align=left, draw=none, fill=none}
			]
			\addplot [color=blue, dashed, forget plot]
			table[row sep=crcr]{%
				0.690523210767485	0.952337939006846\\
				nan	nan\\
				0.698182301926214	0.954888303880584\\
				0.740720652509768	0.971631990923166\\
				nan	nan\\
				6.63849714585131	7.74223765222456\\
				6.62171798158195	7.72090024280358\\
				6.44651182537539	7.50828464308119\\
				4.91577896128427	5.65561585331117\\
				4.33599414591443	4.94578887901755\\
				3.87510390090033	4.37509096413009\\
				3.50113111194255	3.90554318332616\\
				3.19265400004492	3.51168732418044\\
				2.93728247058347	3.17907828191385\\
				2.71964864498644	2.88884989514934\\
				2.53833310087801	2.64026105818872\\
				2.38436606447296	2.42229227951784\\
				2.25128045651764	2.22669040588663\\
				2.13655502675336	2.05049543262114\\
				2.04267066338799	1.89884240143981\\
				1.95963459076642	1.75663278873508\\
				1.88865386402217	1.62624563596382\\
				1.83059907182369	1.51048751121727\\
				1.78576928475954	1.41247561578501\\
				1.7485929154143	1.32222352681433\\
				1.71907302344068	1.24113457460605\\
				1.69272060269566	1.1562438759459\\
				1.67415087393098	1.08233914883762\\
				1.6622832616539	1.02099954219322\\
				1.65386733625921	0.957589620998712\\
				1.64986912998379	0.892037560921658\\
				1.64969964761111	0.875978383463963\\
			};
			\addplot [color=blue]
			table[row sep=crcr]{%
				0.6	0.9384967793951\\
				0.618338628224642	0.938104950308617\\
				0.648793905798994	0.941590613821834\\
				0.681348297080721	0.94949717529313\\
				nan	nan\\
				0.690523210767484	0.952337939006846\\
				nan	nan\\
				0.740720652509767	0.971631990923166\\
				0.784735397167932	0.992788129895645\\
				0.836430015672978	1.02166816622835\\
				0.903081285457935	1.06429766501611\\
				0.966336533788471	1.10960831859865\\
				1.0404471944595	1.16795720933822\\
				1.12272640496747	1.23856764031928\\
				1.21096166306183	1.32017669812361\\
				1.31459594917042	1.42249797672993\\
				1.42973088024528	1.54271419364308\\
				1.56258598538301	1.68788906850593\\
				1.72506712218724	1.87204566881591\\
				1.93824470663112	2.1206274135131\\
				2.24332234561217	2.48367344652836\\
				2.82191517709147	3.18020841398973\\
				4.09040582555806	4.70711218860191\\
				5.1301513553287	5.95135853456293\\
				6.20470206456904	7.23075673888764\\
				6.63839898395265	7.74221443674985\\
				6.63849719810129	7.74223885939222\\
				nan	nan\\
				1.64969964761786	0.875978168682112\\
				1.65155148549245	0.82430636665417\\
				1.65752493035228	0.772085782515534\\
				1.66852177696571	0.718685351474399\\
				1.68567340824022	0.664176145770224\\
				1.70122143200804	0.627270357931334\\
				1.72065870423761	0.589962383306069\\
				1.74461095101345	0.552308634863023\\
				1.77383418367524	0.514378603671281\\
				1.80925287407706	0.476256600581582\\
				1.85201297566793	0.438043809801835\\
				1.90355649004551	0.399860855267345\\
				1.96572746263299	0.361851205319965\\
				2.04092385223854	0.324185924125231\\
				2.1323157324608	0.287070543246399\\
				2.24415638575965	0.250755191413058\\
				2.30947358146801	0.232991434450959\\
				2.38221244762552	0.215549828627013\\
				2.46342030073956	0.198481934977741\\
				2.55430938007594	0.181845555353805\\
				2.65626865728771	0.165706640537511\\
				2.77086264462912	0.150139592524356\\
				2.89980578779873	0.13522752505212\\
				3.04489677404538	0.121061442991419\\
				3.20789563100687	0.107737495024908\\
				3.39033368691904	0.0953513278079097\\
				3.59326939279762	0.0839889438486576\\
				4.06111087945702	0.0645594290095293\\
				4.60381920839382	0.0495209381140764\\
				5.20392010977797	0.0383437002364495\\
				5.8426944211996	0.0301780645002339\\
				6.84339492707523	0.021818841981327\\
				8.21746165455097	0.0150315492033481\\
			};
			\addlegendentry{$f=1$}
			
			\addplot [color=red, dashed, forget plot]
			table[row sep=crcr]{%
				2.21142996842709	2.31700508517089\\
				2.19526116357129	2.29320902127495\\
				1.77806966705553	1.70279478893068\\
				1.65768537817321	1.5207165061733\\
				1.56130738425346	1.36771287090931\\
				1.47735651684721	1.22682619365168\\
				1.40778852042788	1.10216064136085\\
				1.34632324943426	0.982966017255282\\
				1.30144016298455	0.887650452163636\\
				1.25886747834914	0.786685570868225\\
				1.22649530277958	0.697889961985493\\
				1.20382182958141	0.623657468396525\\
				1.18541656485231	0.546538668215415\\
				1.17557605016229	0.486830334584341\\
				1.17049544254077	0.425621207293168\\
				1.1701620353421	0.406571951072478\\
			};
			\addplot [color=red]
			table[row sep=crcr]{%
				0.6	0.152489831626079\\
				0.647927694479614	0.166772142044398\\
				0.693806195445763	0.184229573757699\\
				0.740193715011914	0.207012313358405\\
				0.779522382263467	0.231763717754115\\
				0.816763635792036	0.261166403525133\\
				0.854319984397806	0.298017514752178\\
				0.887033862897949	0.336768317016729\\
				0.92924460291885	0.395938504698883\\
				0.97780431414156	0.474480688527281\\
				1.05295566104985	0.607403139046054\\
				1.1808213269673	0.834298678908333\\
				1.27425135453525	0.991534528291627\\
				1.37283779904428	1.14950632779183\\
				1.48377801391561	1.31941373951405\\
				1.61241883236415	1.50838326562746\\
				1.75482412288165	1.70997799638529\\
				1.92679021768477	1.94546170057186\\
				2.10409014191759	2.18085570163233\\
				2.21027850107486	2.31642148425437\\
				2.21154382561617	2.31737642925583\\
				nan	nan\\
				1.17016470713563	0.404895993506809\\
				1.17210312987449	0.363035218065562\\
				1.17839181945543	0.320672975452563\\
				1.19055815115329	0.277952942361956\\
				1.1995508580297	0.256516682831522\\
				1.21102492590957	0.235079104065887\\
				1.22554433315896	0.213696287345121\\
				1.24385621361276	0.192438770776089\\
				1.26695770479943	0.171411668114688\\
				1.29617979032515	0.1507654949572\\
				1.33327751121934	0.130707451850315\\
				1.38049331847831	0.111521468800611\\
				1.43085864742045	0.0960899707665615\\
				1.48443141925101	0.0834319562293668\\
				1.54293180043084	0.0726247423111079\\
				1.60836492780066	0.0631271171774639\\
				1.68303919272256	0.0546256134587519\\
				1.87082038248595	0.0400157335508489\\
				2.12546445211435	0.0284314087162851\\
				2.44979049735887	0.019994226244556\\
				2.8394349729313	0.0141593122425245\\
				3.60663937152344	0.00832777234897009\\
				4.93476335413663	0.00428227050993613\\
				7.72173868977041	0.00170573766097704\\
				8.07345351071217	0.00155809092880332\\
			};
			\addlegendentry{$f=0.1$}
			\addplot [color=green, dashed, forget plot]
			table[row sep=crcr]{%
				1.078341813099	0.465211614978717\\
				1.07492200563165	0.446616666096186\\
				1.0508527097915	0.332397817552563\\
				1.04158988868221	0.268431730659876\\
				1.0374626798853	0.215101890070264\\
				1.03703758379492	0.193877711726034\\
			};
			\addplot [color=green]
			table[row sep=crcr]{%
				0.6	0.0156173964103936\\
				0.730362758194566	0.0214073394564123\\
				0.795285615885699	0.0271435850806752\\
				0.832016173290073	0.0323608462728791\\
				0.871622564702805	0.0412910020404631\\
				0.909048316978487	0.0565129091104071\\
				0.931006895178305	0.0722665515287146\\
				0.947745972743621	0.0911964076306551\\
				0.958614706389395	0.108794671943433\\
				0.970044311313822	0.133526936765882\\
				0.987487233956283	0.184749592063437\\
				1.05852065537995	0.419928419161574\\
				1.07719426347043	0.465769906459806\\
				1.07834181310019	0.465211738277308\\
				nan	nan\\
				1.03703758157269	0.193861317081506\\
				1.0383743282292	0.1597611878239\\
				1.04369877369177	0.124283061700385\\
				1.05376594706206	0.0944801647316029\\
				1.06654255201032	0.0740206136599757\\
				1.07999891887576	0.0605848363745674\\
				1.09463655624476	0.0506293983138852\\
				1.11199479099772	0.0423217415039838\\
				1.13468496713133	0.0347782085690187\\
				1.16808935675762	0.0274263297138679\\
				1.22617206088552	0.0198485686074799\\
				1.27623059663951	0.0158937329391242\\
				1.35699051589863	0.0118762184436263\\
				1.58555501388757	0.00660264359273821\\
				2.07713987571579	0.00301665040320787\\
				3	0.00124994850400695\\
			};
			\addlegendentry{$f=0.01$}
			\end{axis}
			\end{tikzpicture}%
				\caption{Frequency-response curves for the Duffing oscillator~\eqref{eq:duffing}; solid lines mark stable and dashed lines unstable  periodic orbits; parameters:~$c\!=\!0.01$,~$\omega^2\!=\!1$ and~$\kappa\!=\!1$.}
				\label{fig:Duf_FRF_pos}
			\end{subfigure}
		\end{center}
		\caption{Features of the Duffing oscillator~\eqref{eq:duffing} for $\kappa\geq 0$ (hardening spring stiffness).}
		\label{fig:Duffing_pos}
	\end{figure}

	For negative values of the coefficient~$\kappa$, the Hessian of the potential is not globally positive (cf. Fig~\ref{fig:Duf_pot_neg} green curve). However, outside the ball of radius~$r^*\!=\! \omega\sqrt{-1/(3\kappa)}$, the second derivative of the potential is negative. Therefore, the existence of a periodic orbit is guaranteed by Theorem~\ref{thm:posdef}. Furthermore, two nontrivial equilibria arise at~$q_0=\omega\sqrt{-1/\kappa}$ (cf. Fig~\ref{fig:Duf_pot_neg}, black curve). If we select this $q_0$ as the radius $r$ in Theorem~\ref{thm:Existence}, then condition~\ref{cond:sign_cond} is satisfied. Again, both Theorems~\ref{thm:Existence} and~\ref{thm:posdef} guarantee the existence of a periodic response. 
	
	We numerically continue the trivial and the two nontrivial periodic orbits for increasing forcing amplitude at the fixed forcing frequency~$\Omega\!=\!1$ and show the amplitudes of the  periodic response in Fig.~\ref{fig:Duf_bifi_neg}. For larger forcing amplitudes, numerical continuation becomes more challenging, yet our results continue to imply the existence of a periodic response rigorously. 
	
		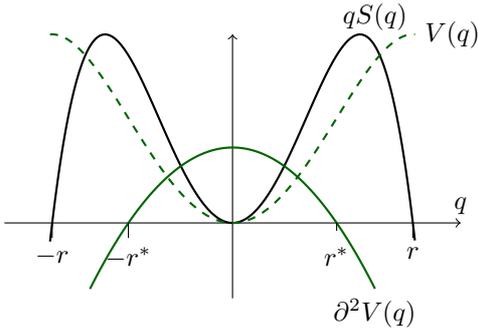
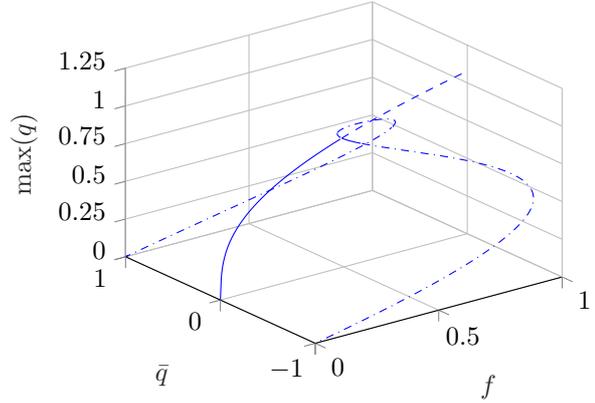
\begin{figure}[ht!]
		\begin{center}
			\begin{subfigure}[b]{0.49\textwidth}
				\begin{tikzpicture}
				\draw[->] (-1,2) -- (5,2) node[above]{$q$};
				\draw[->] (2,1) -- (2,4.5);
				
				\draw [ thick,  domain=-3.2:3.2, samples=200] 
				plot ({2+0.75*\x}, {2+\x*(1*\x-0.1*\x^3)} );
				\draw (2+0.75*2.5,2+2.2361+0.2) node[above]{$qS(q)$};
				
				\draw [ thick, black!60!green, domain=-2.5:2.5, samples=200] 
				plot ({2+0.75*\x}, {2+1-0.1*3*\x*\x} ) node[below]{\textcolor{black}{$\partial^2V(q)$}};
				
				\draw [ thick, dashed ,black!60!green, domain=-3.2:3.2, samples=200] 
				plot ({2+0.75*\x}, {2+(1/2*\x*\x-0.1/4*\x*\x*\x*\x)} )node[right]{\textcolor{black}{$V(q)$}};
				
				\draw  (2+0.75*3.1623,2) -- (2+0.75*3.1623,1.8)node[below]{$r$};
				\draw  (2-0.75*3.1623,2) -- (2-0.75*3.1623,1.8)node[below]{$-r$};
				
				\draw[dashed]  (2+0.75*1.8257,2) -- (2+0.75*1.8257,1.8) node[below]{$r^*$};
				\draw  (2-0.75*1.8257,2) -- (2-0.75*1.8257,1.8)node[below]{$-r^*$};
				
				\end{tikzpicture}
				\caption{Potential and nonlinearity for~$\kappa<0$	of system~\eqref{eq:duffing}.	\newline
					\newline \newline \newline}
				\label{fig:Duf_pot_neg}
			\end{subfigure}
			\begin{subfigure}[b]{0.49\textwidth}
\begin{tikzpicture}

\begin{axis}[%
width=4.521in,
height=3.566in,
at={(0.758in,0.481in)},
scale only axis,
scale=0.5,
unbounded coords=jump,
xmin=0,
xmax=1,
xtick={   0,    0.5,      1},
tick align=outside,
xlabel style={font=\color{white!15!black}},
xlabel={ $f$},
ymin=-1,
ymax=1,
ylabel style={font=\color{white!15!black}},
ylabel={ $\bar{q}$},
zmin=0,
zmax=1.25,
ztick={   0, 0.25,  0.5,   0.75,   1,1.25 },
zlabel style={font=\color{white!15!black}},
zlabel={ $\max(q)$},
view={-37.5}{30},
axis background/.style={fill=white},
axis x line*=bottom,
axis y line*=left,
axis z line*=left,
xmajorgrids,
ymajorgrids,
zmajorgrids,
legend style={at={(0.03,0.97)}, anchor=north west, legend cell align=left, align=left, draw=none}
]
\addplot3 [color=blue, dashed, forget plot]
table[row sep=crcr] {%
	0.491811132907519	-2.79594529883909e-06	0.856583959102101\\
	0.521766619980119	-4.99452989033422e-06	0.873216309546953\\
	0.559780363317188	-7.52507978063477e-06	0.893400753405359\\
	0.599485725928594	-9.90405867873356e-06	0.913497801986436\\
	0.64090626012005	-1.2142333071008e-05	0.933506504246018\\
	0.684064577854211	-1.42498322346363e-05	0.953426014402961\\
	0.728982356868731	-1.62356435946531e-05	0.973255585660433\\
	0.77568034794176	-1.49714940269607e-05	0.992997701019363\\
	0.82417838318861	-1.28640470746788e-05	1.01264939566801\\
	0.874495385276433	-1.08831646501439e-05	1.03220922373206\\
	0.926649377446136	-9.01979712875089e-06	1.05167679566876\\
	1	-6.67606485760253e-06	1.07781063673299\\
};
\addplot3 [color=blue ]
table[row sep=crcr] {%
	1.59859228077153e-08	-2.64066546407093e-12	7.99286149932854e-07\\
	0.003140969151599	3.24476879032254e-07	0.131606330814786\\
	0.0050067335860744	5.17070998240499e-07	0.169771683540038\\
	0.00727031907854581	-1.80588250253688e-06	0.200506269912602\\
	0.0107999637813091	6.87950869160403e-07	0.234648641579765\\
	0.0159785005664769	-8.69115041601809e-07	0.271181155079281\\
	0.0233019108319426	3.0496530147861e-06	0.309909830488807\\
	0.0279150007620785	-5.71982377906011e-06	0.329928759883292\\
	0.0332349018475669	1.73963865057747e-06	0.350289153733721\\
	0.0393125270254444	4.10078895818344e-06	0.370916938993056\\
	0.0461964913723374	-1.20768809841465e-07	0.391753320699345\\
	0.0539335544861506	-6.83068016427146e-06	0.412744637832691\\
	0.0625689882926795	2.52405512224385e-06	0.433853685168149\\
	0.0721468510354776	-1.32858606005648e-06	0.455033344171835\\
	0.0827101753715394	1.55835248316638e-06	0.476271172936948\\
	0.0943010882433336	-6.67347706473365e-06	0.497526392516645\\
	0.106960880861968	9.52413304222688e-06	0.518794978770087\\
	0.120730044188975	-1.354942360976e-06	0.540064044574185\\
	0.135648281582631	-1.03152310857446e-05	0.561299790688363\\
	0.15175450695455	2.96307689462427e-06	0.582516310222075\\
	0.169086834197124	9.64800540748101e-06	0.603690717079505\\
	0.18768256176488	2.25362540240415e-06	0.624825763400573\\
	0.207578154984529	-4.65478398647967e-06	0.645902664994893\\
	0.228809227776355	-1.11184131305064e-05	0.666917686968866\\
	0.25141052487349	-7.28115537351748e-06	0.687877366230762\\
	0.275415905225984	1.30276449739064e-07	0.70877198322213\\
	0.300858327012236	7.06545034301342e-06	0.729594781358513\\
	0.327769834504321	1.35626521852172e-05	0.750343346728149\\
	0.356181546915055	1.03926480525818e-05	0.771024752112304\\
	0.386123649276038	6.89999317804446e-06	0.791627693953253\\
	0.417625385340255	3.63164376315783e-06	0.812149845543658\\
	0.450715052466524	5.70098260821439e-07	0.83258960851068\\
	0.485419998417005	-2.30049719129877e-06	0.852945533598099\\
};
\addplot3 [color=blue, dashdotted, forget plot]
table[row sep=crcr] {%
	0.01	0.999988924135885	0.00333325640684579\\
	6.38810634390197e-07	1.00000000000433	2.12931738019506e-07\\
	0.0954680574451847	0.998984421352075	0.0318714253548349\\
	0.224318898195838	0.994307296508797	0.0754318012208557\\
	0.4639438639814	0.973999316908629	0.161027934876151\\
	0.619590475180725	0.949820370283037	0.223411474428582\\
	0.708966901765402	0.929812705538005	0.263936685663652\\
	0.785700036785072	0.906959159034504	0.30350412932197\\
	0.849301510325445	0.881502350977657	0.342035168637233\\
	0.899684125730519	0.853693145949996	0.379462440536023\\
	0.919990541546127	0.838985862070278	0.397744041146386\\
	0.93711918094172	0.823785906294425	0.415729179050919\\
	0.951151265628233	0.808124962897557	0.433412205816974\\
	0.962183506732865	0.792034487941091	0.450787946574589\\
	0.970326276558513	0.775545641231456	0.467851633691349\\
	0.975701819354535	0.758689235448335	0.484598836509911\\
	0.978442526294685	0.741495701238817	0.501025389174248\\
	nan	nan	nan\\
	0.978875254951206	0.7298737933461	0.511784527671766\\
	0.978689293329639	0.723995066360037	0.517127318671406\\
	0.976589974169951	0.706216946319051	0.532900775245281\\
	0.972297934684706	0.688190543425351	0.548341967306407\\
	0.965970709729817	0.669944650754051	0.563447102873913\\
	0.95776875903846	0.651507657228494	0.578212339448283\\
	0.947854315444304	0.632907549887888	0.59263374402286\\
	0.936390316438116	0.614171909418831	0.606707264699387\\
	0.923539408902872	0.5953278952178	0.62042871507055\\
	0.909463016807394	0.576402216628565	0.633793772166313\\
	0.89432046259338	0.557421087573306	0.646797988328751\\
	0.878268134847497	0.538410162572529	0.659436816883503\\
	0.861458697438017	0.519394453111099	0.671705650927596\\
	0.826156060169146	0.481444874062079	0.695114920575447\\
	0.789531901213056	0.443757282987595	0.716991950343494\\
	0.752602749587456	0.406495536460602	0.737303425178585\\
	0.716266248791765	0.369799542053087	0.756020917151916\\
	0.681290600578266	0.333782176995973	0.773124408906027\\
	0.648310615082782	0.298524642305122	0.788602544524318\\
	0.617830745624808	0.264074448818741	0.802453035375958\\
	0.590234544661214	0.230445233406076	0.814682138599702\\
	0.565799136562581	0.197618312944471	0.825303337530747\\
	0.544712808891385	0.165540458719235	0.834340615652713\\
	0.535464716268766	0.149762623861588	0.838269771236659\\
	0.524215836880591	0.128394490635486	0.843023234266873\\
	nan	nan	nan\\
	0.491811128030878	8.69551628102094e-06	0.856576377325514\\
	0.491811128030946	8.51850386629494e-06	0.85657637732137\\
	nan	nan	nan\\
	0.524215844290894	-0.128372290849479	0.843025664304834\\
	0.537429392647025	-0.153205908036765	0.83744343470746\\
	0.557255945410639	-0.185127782948417	0.82899135769348\\
	0.59352123191281	-0.234591413972708	0.813254861785224\\
	0.742269481386199	-0.396096069385327	0.742748839552132\\
	0.780405568479133	-0.434518740461415	0.7221609890378\\
	0.818646751969129	-0.473625466236908	0.699775408297069\\
	0.837458278065757	-0.493378222445108	0.687914281609779\\
	0.855857230776105	-0.513229172379046	0.675612023347358\\
	0.873677019545743	-0.533150001388891	0.662872608441077\\
	0.890742749356515	-0.553110858446227	0.649700409167101\\
	0.906872218155041	-0.573080566022035	0.636100094531536\\
	0.921877002445145	-0.593026804033114	0.62207654222254\\
	0.9355636237975	-0.61291626224561	0.607634769184594\\
	0.947734796768078	-0.632714758573435	0.592779885604729\\
	0.958190766538928	-0.652387323742863	0.57751707565189\\
	0.966730751560689	-0.671898255587829	0.5618516068002\\
	0.973154511747277	-0.691211148642219	0.545788868080898\\
	0.977264065633751	-0.710288906611804	0.529334436202078\\
	0.978865579789146	-0.729091711217784	0.512496202652743\\
	nan	nan	nan\\
	0.978880533932134	-0.731049964332148	0.510704820104363\\
	0.977771450249824	-0.747582912504278	0.495278602960103\\
	0.973802588538238	-0.765723875520674	0.477687918661861\\
	0.96679091388334	-0.783474850282176	0.45973158933248\\
	0.956582038731277	-0.800795531564395	0.441417870683271\\
	0.943038117020024	-0.817645167890532	0.422755979880346\\
	0.926040804877259	-0.833982714883477	0.403756230993155\\
	0.905494262717616	-0.849767036298674	0.384430152692267\\
	0.881328107939289	-0.864957153082543	0.364790580569822\\
	0.853500210672962	-0.879512537486304	0.344851717212285\\
	0.821999213591402	-0.893393445818231	0.324629154464716\\
	0.78684665277393	-0.906561280119579	0.304139854185256\\
	0.748098561641683	-0.918978966231103	0.28340208611395\\
	0.705846454762794	-0.930611333708103	0.262435324111388\\
	0.660217612438232	-0.941425482121767	0.241260104729805\\
	0.611374618695657	-0.951391118632675	0.219897854576496\\
	0.559514141836031	-0.96048085335273	0.19837069495463\\
	0.504864984377529	-0.968670441788578	0.176701233571008\\
	0.447685464351951	-0.975938967275117	0.154912353549212\\
	0.38826021903057	-0.982268960354003	0.133027009540732\\
	0.326896542832468	-0.987646456073327	0.111068039469548\\
	0.263920382230263	-0.992060993752812	0.0890579985672058\\
	0.199672112018974	-0.995505566544864	0.0670190200843606\\
	0.134502210754941	-0.997976529916363	0.0449727046996078\\
	0.00282411978009467	-0.999999104018307	0.000941336625593925\\
	0	-1	1.33226762955019e-15\\
};
\addplot3 [color=blue, forget plot]
table[row sep=crcr] {%
	0.978880529731251	0.731036000252652	0.510720498353465\\
	0.978880529731258	0.73103593567001	0.510720557552641\\
	nan	nan	nan\\
	0.51960673878924	0.118672973212244	0.844963219570841\\
	0.513008282516832	0.103335064737464	0.847733168570446\\
	0.507301266036917	0.0881160312285469	0.85012218506845\\
	0.502487245728766	0.0730004939334814	0.852132727374487\\
	0.498566823537944	0.0579723467420531	0.853767003479119\\
	0.495539968425999	0.0430148594961866	0.855026920624501\\
	0.493406290233415	0.0281107746077807	0.855914039206657\\
	0.492165265693013	0.0132423977991154	0.856429531107927\\
	nan	nan	nan\\
	0.491816416362257	-0.00160831586228116	0.856574142594225\\
	0.493794280980634	-0.0313301913887234	0.855751392398114\\
	0.499340628377575	-0.0612031001311801	0.853442148279052\\
	0.508459941612617	-0.091375699703358	0.849634575669059\\
	0.521156980966466	-0.12199818818914	0.844312601180367\\
	nan	nan	nan\\
	0.978875259519486	-0.729887877603915	0.511768767403757\\
	0.978880533932252	-0.731049794740056	0.510704975563689\\
};
\end{axis}
\end{tikzpicture}%
				\caption{Amplitude and the mean position $\bar{q}$ of the periodic orbit of system~\eqref{eq:duffing}, depending on the forcing amplitude; solid lines mark stable and dashed lines unstable  periodic orbits; parameters~$c\!=\!0.01$,~$\omega^2\!=\!1$,~$\kappa\!=\!-1$ and~$\Omega\!=\!1$. }
				\label{fig:Duf_bifi_neg}
			\end{subfigure}
		\end{center}
		\caption{Features of the Duffing oscillator~\eqref{eq:duffing} for $\kappa< 0$ (softening spring stiffness).}
		\label{fig:Duffing_neg}
	\end{figure}

\end{example}

\begin{example}[Oscillator chain]
	\label{ex:osci}
	We consider the~$N$-dimensional oscillator chain depicted in Fig.~\ref{fig:Osci_chain}. Two adjacent masses are connected via nonlinear springs and linear dampers. The first and last mass are suspended to the wall. The equation of motion of the~$j$-th mass is given by
	\begin{equation}
	m_j\ddot{q}_j-c_j(\dot{q}_{j-1}-\dot{q}_{j})+c_{j+1}(\dot{q}_{j}-\dot{q}_{j+1})-S_{j}(q_{j-1}-q_{j})+S_{j+1}(q_{j}-q_{j+1})=f_j(t), \qquad j=1,...,N,
	\label{eq:eqm_chain}
	\end{equation}
	where we set the coordinates~$q_0$ and~$q_{N+1}$ to zero. Variants of such systems have been investigated by Shaw and Pierre~\cite{SHAW+Pierre}, Breunung and Haller~\cite{TB_Backbone} and Jain et al.~\cite{NPO}.  Physically, the system may represent, e.g., a discretized beam. As we detail in Appendix~\ref{app:chain_sys}, the following fact holds for the chain system~\eqref{eq:eqm_chain}:
	
	\begin{fact}
		\label{thm:chain_sys}
	For positive masses, damping coefficients and hardening spring stiffnesses, i.e.  
	\begin{equation}
	m_j>0,\qquad c_j>0, \qquad  \frac{\partial S_{j}(\delta)}{\partial \delta}>0 , \qquad j=1,...,N+1,
	\label{eq:pars_chain}
	\end{equation}  
	system~\eqref{eq:eqm_chain} satisfies the conditions of Theorem~\ref{thm:Existence} and hence must have a steady-sate response.	
	\end{fact}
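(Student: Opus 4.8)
The plan is to verify the three hypotheses of Theorem~\ref{thm:Existence} one at a time, discharging condition~\ref{cond:sign_cond} through its convenient sufficient form~\ref{cond:conv_cond} supplied by Theorem~\ref{thm:posdef}. The single structural observation that drives everything, which I would isolate at the outset, is that both the damping matrix and the Hessian of the elastic potential inherit the tridiagonal ``graph-Laplacian'' form dictated by the chain topology of~\eqref{eq:eqm_chain}. Consequently the \emph{same} elementary telescoping argument establishes definiteness in both cases, and the only difference between the two is whether the edge weights are the constants $c_j$ or the tangent stiffnesses $\partial_\delta S_j$.

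For condition~\ref{cond:damping} I would read the symmetric tridiagonal damping matrix $\mathbf{C}$ off the velocity-dependent terms of~\eqref{eq:eqm_chain}, with diagonal entries $c_j+c_{j+1}$ and off-diagonal entries $-c_{j+1}$ coupling masses $j$ and $j+1$. The key step is to recognize its quadratic form as the telescoping sum
\begin{equation}
\mathbf{x}^T\mathbf{C}\mathbf{x}=\sum_{j=1}^{N+1}c_j\,(x_{j-1}-x_j)^2,\qquad x_0=x_{N+1}=0.
\label{eq:chain_damp_form}
\end{equation}
Since every $c_j>0$ by~\eqref{eq:pars_chain}, this is nonnegative and vanishes only when $x_{j-1}=x_j$ for all $j$; the pinned boundary values $x_0=x_{N+1}=0$ then propagate to force $\mathbf{x}=\mathbf{0}$, so $\mathbf{C}$ is positive definite and~\ref{cond:damping} holds with $C_0=\lambda_{\min}(\mathbf{C})$. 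For condition~\ref{cond:potential} I would take the scalar spring potentials $P_j$ with $P_j'=S_j$ and set $V(\mathbf{q})=\sum_{j=1}^{N+1}P_j(q_{j-1}-q_j)$ under the convention $q_0=q_{N+1}=0$; a one-line differentiation gives $\partial V/\partial q_j=-S_j(q_{j-1}-q_j)+S_{j+1}(q_j-q_{j+1})$, which is precisely the $j$-th geometric nonlinearity of~\eqref{eq:eqm_chain}, so the forces derive from $V$ and~\ref{cond:potential} is satisfied.

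Differentiating $V$ once more produces a Hessian of exactly the chain structure, whose quadratic form is the weighted analogue of~\eqref{eq:chain_damp_form},
\begin{equation}
\mathbf{x}^T\frac{\partial^2 V(\mathbf{q})}{\partial\mathbf{q}^2}\mathbf{x}=\sum_{j=1}^{N+1}\frac{\partial S_j}{\partial\delta}(q_{j-1}-q_j)\,(x_{j-1}-x_j)^2,\qquad x_0=x_{N+1}=0,
\label{eq:chain_hess_form}
\end{equation}
now with the constant weights replaced by the tangent stiffnesses. Because every tangent stiffness is positive, the identical telescoping-plus-boundary argument shows the Hessian is positive definite at every $\mathbf{q}$. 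Invoking Theorem~\ref{thm:posdef} then yields condition~\ref{cond:conv_cond}, hence~\ref{cond:sign_cond}, and together with~\ref{cond:damping} and~\ref{cond:potential} this places system~\eqref{eq:eqm_chain} under the hypotheses of Theorem~\ref{thm:Existence}, giving the claimed periodic response.

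The step that requires genuine care, and which I expect to be the main obstacle, is the \emph{uniform} lower bound demanded by~\ref{cond:conv_cond}: a single $C_v>0$ must control~\eqref{eq:chain_hess_form} simultaneously for all $|\mathbf{q}|\geq r^*$, whereas pointwise positivity of $\partial_\delta S_j$ only gives definiteness at each fixed $\mathbf{q}$. I would reduce this to a stiffness estimate by bounding~\eqref{eq:chain_hess_form} below by $\big(\min_j\inf_\delta\partial_\delta S_j(\delta)\big)\,\mathbf{x}^TL\mathbf{x}$, where $L$ is the fixed unit-weight Dirichlet path Laplacian already appearing in~\eqref{eq:chain_damp_form}, so that $C_v=\big(\min_j\inf_\delta\partial_\delta S_j(\delta)\big)\lambda_{\min}(L)$ works as soon as the tangent stiffnesses are bounded away from zero. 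For a genuine hardening spring the stiffness is minimized at zero elongation, whence $\partial_\delta S_j(\delta)\geq\partial_\delta S_j(0)>0$ and the uniform bound is immediate; this is the only point at which the hardening assumption~\eqref{eq:pars_chain} is used beyond mere positivity, and it is worth flagging explicitly in the final write-up so that the conclusion is not mistakenly claimed for stiffnesses that decay to zero at large elongation.
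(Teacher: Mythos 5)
Your proof is correct and follows the same top-level plan as the paper's own proof in Appendix~\ref{app:chain_sys} --- verify conditions~\ref{cond:damping} and~\ref{cond:potential} directly and obtain condition~\ref{cond:sign_cond} from the Hessian condition~\ref{cond:conv_cond} via Theorem~\ref{thm:posdef} --- but your definiteness argument is genuinely different. The paper writes out the Hessian~\eqref{eq:chain_hessian} as a tridiagonal matrix, introduces the nested matrices $\mathbf{H}^j$ of~\eqref{eq:Hj}, proves their positive definiteness by induction on $j$, and finally adds a positive semidefinite correction to recover the full Hessian; the same induction is then invoked again for the damping matrix. You instead recognize both quadratic forms as weighted Dirichlet forms on the chain graph, $\sum_{j=1}^{N+1} w_j(x_{j-1}-x_j)^2$ with pinned ends $x_0=x_{N+1}=0$, the weights being the $c_j$ in one case and the tangent stiffnesses $\partial_\delta S_j(q_{j-1}-q_j)$ in the other. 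Positivity of the weights plus the pinned boundary forces $\mathbf{x}=\mathbf{0}$, so definiteness follows in two lines, once, for both matrices. Your route is more elementary, eliminates the minor-by-minor bookkeeping, and makes the paper's closing remark about removing one wall attachment ($c_{N+1}=S_{N+1}=0$, or $c_1=S_1=0$) transparent: deleting one summand from the sum of squares still leaves a chain with one pinned end, hence a definite form, while deleting both leaves the rigid-body mode.

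There is one point where your write-up is more careful than the paper's. Condition~\ref{cond:conv_cond} requires a single constant $C_v>0$ valid for all $|\mathbf{q}|\geq r^*$, whereas positivity of each $\partial_\delta S_j$ only yields definiteness at each fixed $\mathbf{q}$; the paper's induction, carried out at a fixed $\mathbf{q}$, never addresses this, and the gap is real. For bounded increasing spring laws such as $S_j(\delta)=\arctan\delta$, which satisfy~\eqref{eq:pars_chain} literally, the spring forces are bounded, so a sufficiently large mean forcing violates condition~\ref{cond:sign_cond} exactly as in the pendulum example~\eqref{eq:pendulum}, and no periodic orbit exists; Fact~\ref{thm:chain_sys} therefore genuinely needs the tangent stiffnesses bounded away from zero. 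Your bound $C_v=\bigl(\min_j\inf_\delta\partial_\delta S_j(\delta)\bigr)\lambda_{\min}(L)$, with $L$ the fixed unit-weight path Laplacian, supplies the uniform constant precisely under that extra assumption, and you correctly flag that this is where ``hardening'' must mean more than $\partial_\delta S_j>0$ --- e.g.\ stiffness minimized at zero elongation, as in the cubic laws $S_j(\delta)=k_j\delta+\kappa_j\delta^3$ cited in the paper. That is the right fix, and stating it explicitly, as you propose, strengthens the result rather than weakening it.
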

	\begin{figure}[ht!]
		\begin{center}
	\begin{tikzpicture}

\tikzstyle{spring}=[thick,decorate,decoration={zigzag,pre length=0.3cm,post
	length=0.3cm,segment length=6}]
\tikzstyle{damper}=[thick,decoration={markings,  
	mark connection node=dmp,
	mark=at position 0.45 with 
	{
		\node (dmp) [thick,inner sep=0pt,transform shape,rotate=-90,minimum
		width=8pt,minimum height=3pt,draw=none] {};
		\draw [thick] ($(dmp.north east)+(5pt,0)$) -- (dmp.south east) -- (dmp.south
		west) -- ($(dmp.north west)+(5pt,0)$);
		\draw [thick] ($(dmp.north)+(0,-3pt)$) -- ($(dmp.north)+(0,3pt)$);
	}
}, decorate]


\node[pattern=north east lines, pattern color=black] at (-2.375,-0.125) (LW) [minimum width=0.25cm,  minimum height=1.75cm] {};

\node[draw,outer sep=0pt,thick] (M1) [minimum width=1.5cm, minimum height=1cm] {} node[above]{$\qquad m$};

\draw[spring] ($(LW.east) - (0,0.125)$) -- ($(M1.west) - (0,0.25)$) node [midway,below] {$S_1$};
\draw[damper] ($(LW.east) + (0,0.375)$) -- ($(M1.west) + (0,0.25)$)
node [midway,above] {\raisebox{0.1cm}{$c_1$}};
\draw[thick,->] (-1.95,-0.4)-- (-1.15,-0.05) ;

\draw[thick] ($(M1.south) + (-0.5,-0.125)$) circle (0.125);
\draw[thick] ($(M1.south) + (0.5,-0.125)$) circle (0.125);
\draw[-triangle 45] ($(M1) - (0.5,0)$) -- ($(M1) + (0.5,0)$)  node[midway,below]{$f_1$};
\draw[-latex] ($(M1.north) + (-0.5,0.1)$) -- ($(M1.north) + (0.5,0.1)$) node[midway,above]{$q_1$};		

\coordinate (LBs) at (2.25,-0.25);	
\coordinate (LBd) at (2.45,0.25);	
\draw[black,line width=0.3mm] plot [smooth] coordinates {(2.35,-0.6) (LBs) (LBd)  (2.35,0.6)} ;
\draw[spring] ($(M1.east) - (0,0.25)$) -- (LBs) 
node [midway,below] { $S_{2}$};
\draw[thick,->] (1.05,-0.4)-- (1.85,-0.05) ;

\draw[damper] ($(M1.east) + (0,0.25)$) -- (LBd)
node [midway,above] {\raisebox{0.1cm}{$c_2$}};

\node[above](dots) at ($(LBs) + (0.3,0.125)$) {...};

\coordinate (RBs) at (2.65,-0.25);	
\coordinate (RBd) at (2.85,0.25);	
\draw[black,line width=0.3mm] plot [smooth] coordinates {(2.75,-0.6) (RBs) (RBd)  (2.75,0.6)} ;

\node[draw,outer sep=0pt,thick] (M2) at (4.9,0) [minimum width=1.5cm, minimum height=1cm] {} node[above] at (4.9,0){$\qquad m$};

\draw[spring] (RBs)--($(M2.west) - (0,0.25)$)   
node [midway,below] {\small $S_{N}$};
\draw[thick,->] (2.95,-0.4)-- (3.75,-0.05) ;
\draw[damper] (RBd) -- ($(M2.west) + (0,0.25)$) 
node [midway,above] {\raisebox{0.1cm}{$c_{N}$}};

\draw[thick] ($(M2.south) + (-0.5,-0.125)$) circle (0.125);
\draw[thick] ($(M2.south) + (0.5,-0.125)$) circle (0.125);
\draw[-triangle 45] ($(M2) - (0.5,0)$) -- ($(M2) + (0.5,0)$)  node[midway,below]{$f_N$};	
\draw[-latex] ($(M2.north) + (-0.5,0.1)$) -- ($(M2.north) + (0.5,0.1)$) node[midway,above]{$q_N$};

\node[pattern=north east lines, pattern color=black] at (7.275,-0.125) (RW) [minimum width=0.25cm,  minimum height=1.75cm] {};

\draw[spring] ($(M2.east) - (0,0.25)$) -- ($(RW.west) - (0,0.125)$) node [midway,below] {  $S_{N+1}$};
\draw[thick,->] (5.95,-0.4)-- (6.75,-0.05) ;
\draw[damper] ($(M2.east) + (0,0.25)$) -- ($(RW.west) + (0,0.375)$)
node [midway,above] {\raisebox{0.1cm}{$c_{N+1}$}};

\draw (-2.25,0.75)--(-2.25,-0.75)--(7.15,-0.75)--(7.15,0.75);

\node[pattern=north east lines, pattern color=black] at (2.45,-0.875) (BW) [minimum width=9.4cm,  minimum height=0.25cm] {};

\end{tikzpicture}

		\end{center}
		\caption{Oscillator chain in Example~\ref{ex:osci}.}
		\label{fig:Osci_chain}
	\end{figure}
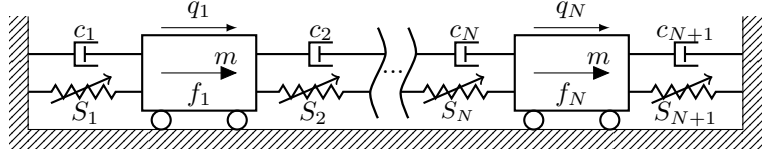
	 
	 The systems investigated by Shaw and Pierre~\cite{SHAW+Pierre}, Breunung and Haller~\cite{TB_Backbone}, Jain et al.~\cite{NPO} satisfy the conditions~\eqref{eq:pars_chain}, as the stiffness forces are of the form~$S_j(\delta)\!=\!k_j \delta+\kappa_j \delta^3$ with~$k_j\!>\!0$ and~$\kappa_j\!\geq\!0$. Therefore, we can guarantee the existence of the  periodic response of these systems for arbitrary large forcing amplitudes. 
	 
	 In the derivations in Appendix~\ref{app:chain_sys}, we further detail that the assumptions on the parameters~\eqref{eq:pars_chain} can be relaxed to include either the cases~$c_{N+1}\!=\!0$ and~$S_{N+1}(q_{N+1})\!=\!0$ or ~$c_1\!=\!0$ and~$S_1(q_1)\!=\!0$. In both cases, the damping matrix~$\mathbf{C}$ and the second derivative of the potential remain positive definite. The conditions on the first and the~$N+1$-th damping coefficient and stiffness force cannot be relaxed simultaneously.	In the case of $S_1(q_1)=S_{N+1}(q_N)\!=\!0$, the system is not connected to the walls and hence a non-periodic, free rigid body motion of the whole chain can be initiated with appropriate forcing. For $c_1\!=\!c_{N+1}\!=\!0$, this motion is undamped and hence if the springs are linear, then forcing at resonance cannot result in a periodic response.

\end{example}

\section{Conclusions}
We have discussed the example of a specific mechanical system for which the application of the harmonic balance procedure leads to the wrong conclusion about the existence of a periodic response. This underlines the necessity of rigorous existence criteria for periodic orbits in damped-forced nonlinear mechanical systems. Such existence criteria can give a priori justification for the use of formal perturbation methods and numerical continuation, eliminating erroneous conclusions or wasted computational resources.  

To obtain such an existence criterion, we have extend a theorem by Rouche and Mawhin~\cite{Rouche_PO} to obtain generally applicable sufficient conditions for the existence of a periodic response in periodically forced, nonlinear mechanical systems. Roughly speaking, these conditions guarantee a periodic orbit under arbitrarily large forcing and response amplitudes, as long as the dissipation acts on all degrees of freedom, the spring forces are potential, and the  potential function is strictly convex or strictly concave outside a neighborhood of the origin. 

Since the conditions of our theorem are sufficient but not necessary, the question arises whether they can be relaxed.  With mechanically relevant examples, we have illustrated that none of the conditions in our theorem can be individually omitted while keeping the others. Based on these results, we identify a large class of nonlinear mechanical systems for which numerical procedures, such as the harmonic balance and the collocation method, are a priori justified. This enables the reliable computation of periodic  orbits for large forcing and oscillation amplitudes in this class of systems.

Theorem~\ref{thm:Existence} guarantees the existence of a periodic orbit but gives not immediate conclusion about the stability of the orbit. For positive definite damping, we do observe both stable and unstable periodic orbits numerically (c.f. Fig~\ref{fig:Duf_pot_neg}) when the conditions of Theorem~\ref{thm:Existence} hold. 

We have limited our discussion to periodic forcing, for which extensive mathematical literature exists. Quasi-periodic forcing is also of interest in engineering applications; indeed, the harmonic balance method has been extended to compute quasi-periodic steady states response of nonlinear mechanical systems (cf. Chua and Ushida~\cite{Chua_QPHB}). The extension of the present results to quasi-periodic forcing, however, is not immediately clear.  

Our discussion is restricted to mechanical equations of motions with position depended nonlinearities, as it is customary in the structural vibrations literature. It is also of interest, however, to extend our conclusions to velocity-dependent nonlinearities.  

\paragraph {Acknowledgements.} {We are thankful to Florian Kogelbauer and Walter Lacarbonara for fruitful discussion on this work. }

\paragraph{Conflict of Interest.} { The authors declare that they have no conflict of interest. }

\paragraph{Funding.}{We recieved no funding for this study.} 

	\setcounter{section}{0}
\renewcommand\thesection{\Alph{section}}
\renewcommand{\thesubsection}{\thesection\arabic{subsection}}
\section{Proofs of the main theorems }
In the following, we prove the main Theorems~\ref{thm:Existence} and~\ref{thm:posdef} and derive an upper bound on the amplitudes of the steady-state responses.
 \subsection{Proof of Theorem~\ref{thm:Existence}}
 \label{app:Exist_proof}
We base our proof of Theorem~\ref{thm:Existence} on a Theorem by Rouche and Mawhin~\cite{Rouche_PO}, who analyze systems of the following form
\begin{equation}
\ddot{\mathbf{q}}+\bar{\mathbf{C}}\dot{\mathbf{q}}+\frac{\partial \bar{V}(\mathbf{q})}{\partial \mathbf{q}}=\mathbf{g}(t), \qquad \mathbf{g}(t)=\mathbf{g}(t+T),~~\bar{V}\in C^1.
\label{eq:RM_sys}
\end{equation}

\begin{theorem}
	\label{thm:RM}
	Assume system~\eqref{eq:RM_sys} satisfies the following conditions: 
	\begin{enumerate}[label=(\subscript{RM}{{\arabic*}})]
		\item The damping matrix $\bar{\mathbf{C}}$ is positive or negative definite.
		\label{cond:damping_RM}
		\item There exists a distance $r\!>\!0$ and an integer $1\!\leq \!n\!\leq N$ such that
		\begin{equation}
		\begin{split}
		q_j\frac{\partial \bar{V}(\mathbf{q})}{\partial q_j} &>0, \qquad |q_j|>r,~~j=1,...,n,
		\\
		q_j\frac{\partial \bar{V}(\mathbf{q})}{\partial q_j}&<0, \qquad |q_j|>r,~~j=n+1,...,N. 
		\end{split}
		\label{eq:sign_cond_RM}
		\end{equation} 
		\label{cond:sign_RM}
		\item The forcing $\mathbf{g}$ is continuous with zero  mean value, i.e., 
		\begin{equation}
		\bar{\mathbf{g}}=\frac{1}{T}\int_{0}^{T}\mathbf{g}(t)dt=\mathbf{0} .
		\end{equation}
		\label{cond:forcing_RM}
	\end{enumerate}
Then system~\eqref{eq:RM_sys} has at least one $T$-periodic solution. 	
\end{theorem}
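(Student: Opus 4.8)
The plan is to prove Theorem~\ref{thm:RM} by recasting the search for $T$-periodic solutions of~\eqref{eq:RM_sys} as a coincidence equation $L\mathbf{q}=N\mathbf{q}$ and applying the continuation theorem of Gaines and Mawhin~\cite{Mahwin_Cidx}. On the space of continuous $T$-periodic functions I would take $L\mathbf{q}=\ddot{\mathbf{q}}$ with periodic boundary conditions and $N\mathbf{q}=\mathbf{g}(t)-\bar{\mathbf{C}}\dot{\mathbf{q}}-\partial\bar{V}(\mathbf{q})/\partial\mathbf{q}$. The operator $L$ is Fredholm of index zero, its kernel being the constant functions $\mathbb{R}^N$ and its range the functions of zero mean, while $N$ is $L$-compact on bounded sets because $\bar V\in C^1$ and $\mathbf{g}$ is continuous. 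The theorem then produces a solution provided (i) a uniform a priori bound holds for all solutions of the homotopy $L\mathbf{q}=\mu N\mathbf{q}$, $\mu\in(0,1)$, (ii) the averaged nonlinearity does not vanish on the boundary of a large ball in $\ker L$, and (iii) the Brouwer degree of that averaged map is nonzero.

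The core of the argument is the a priori bound, which I would obtain in two steps. First, testing the homotopy equation $\ddot{\mathbf{q}}+\mu\bar{\mathbf{C}}\dot{\mathbf{q}}+\mu\,\partial\bar V/\partial\mathbf{q}=\mu\mathbf{g}$ with $\dot{\mathbf{q}}$ and integrating over one period, the terms $\int_0^T\ddot{\mathbf{q}}^T\dot{\mathbf{q}}\,dt$ and $\int_0^T(\partial\bar V/\partial\mathbf{q})^T\dot{\mathbf{q}}\,dt$ vanish by periodicity, leaving $\int_0^T\dot{\mathbf{q}}^T\bar{\mathbf{C}}\dot{\mathbf{q}}\,dt=\int_0^T\mathbf{g}^T\dot{\mathbf{q}}\,dt$. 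The definiteness~\ref{cond:damping_RM} of $\bar{\mathbf{C}}$ (with some constant $C_0>0$ such that $|\mathbf{x}^T\bar{\mathbf{C}}\mathbf{x}|\ge C_0|\mathbf{x}|^2$) together with Cauchy--Schwarz then give $\|\dot{\mathbf{q}}\|_{L^2}\le\|\mathbf{g}\|_{L^2}/C_0$. Second, integrating the $j$-th scalar equation over a period makes the acceleration and damping terms drop out by periodicity, and the zero-mean hypothesis~\ref{cond:forcing_RM} removes the forcing, so that $\int_0^T\partial_{q_j}\bar V(\mathbf{q})\,dt=0$ for each $j$. If $|q_j(t)|>r$ held for all $t$, then $q_j$ would keep a fixed sign on the whole period, and the sign condition~\ref{cond:sign_RM} would force $\partial_{q_j}\bar V$ to keep a fixed nonzero sign, contradicting the vanishing average; hence each component meets $|q_j(t_j^*)|\le r$ at some $t_j^*$. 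Combining this with the oscillation estimate $|q_j(t)-q_j(t_j^*)|\le\sqrt{T}\,\|\dot q_j\|_{L^2}$ from step one yields a uniform $L^\infty$ bound $M$ independent of $\mu$.

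For the remaining two conditions I would note that on $\ker L$ the relevant map is the projection of $N$ onto the mean value, which for a constant $\mathbf{a}$ reduces to $\mathbf{a}\mapsto\bar{\mathbf{g}}-\partial\bar V(\mathbf{a})/\partial\mathbf{a}=-\partial\bar V(\mathbf{a})/\partial\mathbf{a}$, again using $\bar{\mathbf{g}}=\mathbf{0}$. On the boundary of a sufficiently large box $\prod_j[-R_j,R_j]$ with $R_j>r$, condition~\ref{cond:sign_RM} fixes the sign of $\partial_{a_j}\bar V$ on each face: it agrees with that of the diagonal map $\mathbf{a}\mapsto(a_1,\dots,a_n,-a_{n+1},\dots,-a_N)$. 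A straight-line homotopy to this invertible diagonal map stays nonzero on the boundary, so the Brouwer degree of $\partial\bar V/\partial\mathbf{a}$ equals $(-1)^{N-n}\ne0$ (and the factor $-1$ does not affect nonvanishing), establishing both (ii) and (iii). With (i)--(iii) verified, the continuation theorem delivers a $T$-periodic solution, and a standard bootstrap using $\bar V\in C^1$ and $\mathbf{g}\in C^0$ upgrades it to a classical one.

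The step I expect to be most delicate is the a priori bound, and within it the passage from the identity $\int_0^T\partial_{q_j}\bar V\,dt=0$ to pointwise control of $q_j$: one must treat separately the cases $q_j>r$ throughout and $q_j<-r$ throughout, and verify that the homotopy parameter $\mu$ does not disturb the sign and zero-mean structure exploited here. Checking that $N$ is genuinely $L$-compact and that the boundary homotopy for the degree is admissible are the other technical points; these are routine once the estimates above are in place.
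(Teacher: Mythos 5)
Your proposal is correct and takes essentially the same route as the paper: the paper's proof of Theorem~\ref{thm:RM} defers to Rouche and Mawhin's coincidence-degree argument, which is exactly your scheme of homotoping $\ddot{\mathbf{q}}+\mu\bigl(\bar{\mathbf{C}}\dot{\mathbf{q}}+\partial\bar{V}/\partial\mathbf{q}-\mathbf{g}\bigr)=\mathbf{0}$ to $\ddot{\mathbf{q}}=\mathbf{0}$, with $\mu$-uniform a priori bounds and a nonzero Brouwer degree of $-\partial\bar{V}/\partial\mathbf{a}$ on $\ker L$ supplied by the sign condition~\ref{cond:sign_RM}. Moreover, your a priori estimate (testing with $\dot{\mathbf{q}}$, using definiteness of $\bar{\mathbf{C}}$ and Cauchy--Schwarz, then integrating each scalar equation over a period and invoking the mean-value/sign argument to force $|q_j(t_j^*)|\le r$) is the same computation the paper performs in Appendix~\ref{app:bound_ampl}, and the remaining points you defer (the $C^0$ bound on $\dot{\mathbf{q}}$ needed for the $C^1$-norm bound, $L$-compactness, admissibility of the boundary homotopy) are indeed routine.
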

\begin{proof}
	The proof relies on a homotopy of equation~\eqref{eq:RM_sys} to the equation $\ddot{\mathbf{q}}=0$. Conditions~\ref{cond:damping_RM}-\ref{cond:forcing_RM} ensure a bound on the solution for all homotopy parameters. In addition, condition~\ref{cond:sign_RM} ensures a non-zero Brouwer degree, i.e., the existence of at least one T-periodic solution during the homotopy.  For a detailed proof, we refer to Rouche and Mawhin~\cite{Rouche_PO}.
\end{proof}
We transform system~\eqref{eq:sys0} such that it is in the form~\eqref{eq:RM_sys} and then show that the conditions of Theorem~\ref{thm:Existence} imply that Theorem~\ref{thm:RM} applies. First, we absorb the mean forcing into the potential by setting
\begin{equation}
\tilde{V}(\mathbf{q})=V(\mathbf{q})-\mathbf{q}^T\bar{\mathbf{f}}, \qquad \tilde{\mathbf{f}}=\mathbf{f}-\bar{\mathbf{f}}.
\label{eq:mod_pot}
\end{equation}  
The equation of motion with nonlinearity derived from the potential~$\tilde{V}$ and forcing~$\tilde{\mathbf{f}}$ is equivalent to system~\eqref{eq:sys0}. Further, we right-multiply equation~\eqref{eq:sys0} with the inverse of the mass matrix $\mathbf{M}$ and obtain
\begin{equation}
\ddot{\mathbf{q}}+\mathbf{M}^{-1}\mathbf{C}\dot{\mathbf{q}}+\mathbf{M}^{-1}\frac{\partial \tilde{V}}{\partial \mathbf{q}}=\mathbf{M}^{-1}\mathbf{\mathbf{f}}(t).
\label{eq:eqm_RPO}
\end{equation}
The potential for the geometric nonlinearities of system~\eqref{eq:eqm_RPO} is given by $\bar{V}(\mathbf{q})\!:=\!\tilde{V}(\mathbf{M}^{-1}\mathbf{q})$. Therefore, system~\eqref{eq:eqm_RPO} can be rewritten in the form~\eqref{eq:RM_sys}.  Since the mass matrix is positive definite by assumption, the product $\bar{\mathbf{C}}\!:=\!\mathbf{M}^{-1}\mathbf{C}$ is positive or negative definite. Therefore, condition~\ref{cond:damping} of Theorem~\ref{thm:Existence} implies that condition~\ref{cond:damping_RM} of Theorem~\ref{thm:RM} is satisfied. 

Rewriting condition~\ref{cond:sign_cond} for the the potential $\bar{V}\!:=\!\tilde{V}(\mathbf{M}^{-1}\mathbf{q})$, one recovers the equivalent condition~\ref{cond:sign_RM}. Since~$\tilde{\mathbf{f}}$ has zero mean, condition~\ref{cond:forcing_RM} holds. Therefore, the conditions in Theorem~\ref{thm:Existence} ensure that Theorem~\ref{thm:RM} applies, and hence the existence of a periodic orbit can be guaranteed.

\subsection{Maximal amplitude of the periodic response}
\label{app:bound_ampl}
Essential to the proof of Rouche and Mawhin~\cite{Rouche_PO} is an upper bound on the periodic solution of equation~\eqref{eq:sys0}. In the following, we show that this can be obtained for system~\eqref{eq:sys0} in its original form. Therefore, a transformation to equation~\eqref{eq:RM_sys} is not necessary. We derive an upper bound on the solutions of
\begin{equation}
\mathbf{M} \ddot{\mathbf{q}}+\mathbf{C}\dot{\mathbf{q}}+\mathbf{S}(\mathbf{q})= \mathbf{f}(t),~\qquad \mathbf{q}\in C^2(T),
\label{eq:sys0_lams}
\end{equation}
in the $C^0$ norm defined by 
\begin{equation}
||\mathbf{q}||_{C^0}=\max_{0\leq t\leq T}\left|\mathbf{q}\right|.
\label{eq:def_norm}
\end{equation}
First, we follow the derivation by Rouche and Mawhin~\cite{Rouche_PO} by left-multiplying equation~\eqref{eq:sys0_lams} with ~$\dot{\mathbf{q}}^T$ and integrating over one period to obtain
\begin{equation}
\int_0^T\dot{\mathbf{q}}^T\mathbf{M}\ddot{\mathbf{q}}~dt+ 
\int_0^T\dot{\mathbf{q}}^T\mathbf{C}\dot{\mathbf{q}}~dt
+
 \int_0^T\dot{\mathbf{q}}^T\mathbf{S}(\mathbf{q})~dt
=  \int_0^T\dot{\mathbf{q}}^T\mathbf{f}(t)~dt.
\label{eq:eqm_times_qdot}
\end{equation}
Observing that 
\begin{equation}
\int_0^T\dot{\mathbf{q}}^T\mathbf{M}\ddot{\mathbf{q}}~dt=
\int_0^T \frac{d}{dt}\left(\frac{1}{2}  \dot{\mathbf{q}}^T\mathbf{M}\dot{\mathbf{q}}\right) dt=0,
\end{equation}
where we have used the symmetry of the mass matrix  ($\mathbf{M}=\mathbf{M}^T$)  and the periodicity of~$\mathbf{q}$. Similarly  
\begin{equation}
\int_0^T\dot{\mathbf{q}}^T\mathbf{S}(\mathbf{q})~dt=
\int_0^T\frac{d}{dt}\left(V(\mathbf{q})\right)dt=0,
\end{equation}
where we have used the fact that the geometric nonlinearities arise from a potential~\eqref{eq:potential} and again the periodicity of~$\mathbf{q}$. Therefore, from equation~\eqref{eq:eqm_times_qdot}, we obtain
\begin{equation}
\left|\int_0^T \dot{\mathbf{q}}^T\mathbf{C}\dot{\mathbf{q}} ~dt\right|=\left|\int_0^T\dot{\mathbf{q}}^T \mathbf{f}~ dt \right|.
\label{eq:vel_est}
\end{equation}
With the assumption of of a positive or negative definite~$\mathbf{C}$ matrix (cf. equation~\eqref{eq:damping_cond}), we obtain a lower bound on the left hand side of equation~\eqref{eq:vel_est} to
\begin{equation}
C_0\int_0^T |\dot{\mathbf{q}}|^2 dt\leq \left|\int_0^T \dot{\mathbf{q}}^T\mathbf{C}\dot{\mathbf{q}} dt\right|.
\label{eq:vel_est_LHS}
\end{equation}
For the right hand side of equation~\eqref{eq:vel_est}, we obtain an upper bound by using the Cauchy-Schwartz inequality 
\begin{equation}
\left|\int_0^T\dot{\mathbf{q}} \mathbf{f} ~dt\right|\leq
\left(\int_0^T |\dot{\mathbf{q}} |^2 ~dt\right)^{1/2} \left(\int_0^T |\mathbf{f} |^2 ~dt\right)^{1/2}.
\label{eq:vel_est_RHS}
\end{equation}
Using the definition~\eqref{eq:def_CF} of~$C_f$ and combining the estimates~\eqref{eq:vel_est_LHS} and~\eqref{eq:vel_est_RHS}, we obtain from equation~\eqref{eq:vel_est}, that
\begin{equation}
\left(\int_0^T |\dot{\mathbf{q}} |^2 ~dt\right)^{1/2}\leq \frac{C_f}{C_0}.
\label{eq:ydot_sqrt}
\end{equation}
Equation~\eqref{eq:ydot_sqrt} is and upper bound on the $L_2$-norm of the velocity of the periodic orbit. Rouche and Mawhin~\cite{Rouche_PO} derive the same bound. Now we depart from the derivations by Rouche and Mawhin~\cite{Rouche_PO} and integrate system~\eqref{eq:sys0_lams} for one period, which yields
\begin{equation}
\int_0^T \mathbf{S}(\mathbf{q})~dt=T\bar{\mathbf{f}},\quad \Leftrightarrow \quad \int_0^T \left(\mathbf{S}(\mathbf{q}) -\bar{\mathbf{f}}\right)dt=0,
\label{eq:eqm_intT}
\end{equation}  
where we used the definition~\eqref{eq:f_mean} of the mean forcing~$\bar{\mathbf{f}}$. Applying the mean-value theorem to~\eqref{eq:eqm_intT}, we conclude that there exist $t_j$ such that 
\begin{equation}
S_j(\mathbf{q}(t_j))-\bar{f}_j=0,\qquad 0\leq t_j\leq T,\qquad j=1,...,N.
\label{eq:eqm_MVTHM}
\end{equation}
From condition~\eqref{eq:sign_cond}, we conclude that equation~\eqref{eq:eqm_MVTHM} is only satisfied if~$|q_j(t_j)|<r$. We conclude
\begin{equation}
\begin{split}
q_j^2(t)&=\left(q_j(t_j)+\int_{t_j}^t\dot{q}_j(s)ds\right)^2=
q_j(t_j)^2+2q_j(t_j)\int_{t_j}^t\dot{q}_j(s)ds+\left(\int_{t_j}^t\dot{q}_j(s)ds\right)^2
\\
&\leq r^2+2r|t-t_j|^{\frac{1}{2}}\left(\int_{t_j}^t\dot{q}_j^2(s)ds\right)^{1/2}+
|t-t_j|\int_{t_j}^t\dot{q}_j^2(s)ds
\\&\leq r^2+2r\sqrt{T}\left(\int_{0}^T\dot{q}_j^2(s)ds\right)^{1/2}+
T\int_{0}^T\dot{q}_j^2(s)ds\leq \left(r+\sqrt{T}\frac{C_f}{C_0}\right)^2,
\label{eq:qj^2}
\end{split}
\end{equation}
where we have used the upper bound~\eqref{eq:ydot_sqrt}.  Therefore, for the~$C^0$-norm of the positions,  we obtain 
\begin{equation}
||\mathbf{q}||_{C^0} < \left(\sum_{j=1}^N \sup_{0\leq t \leq T} (q_j^2(t)) \right)^{1/2}\leq \sqrt{N}\left(r+\sqrt{T} \frac{C_f}{C_0} \right) ,
\label{eq:pos_C0_bound}
\end{equation}
 In contrast, Rouche and Mawhin~\cite{Rouche_PO} use the bound~\eqref{eq:ydot_sqrt} to obtain an upper estimate  on the oscillatory part of the position $\tilde{\mathbf{q}}:=\mathbf{q}-1/T\int_0^T\mathbf{q}\,dt$ to $||\tilde{\mathbf{q}}||_{C^0}\leq TC_f/C_0$. From equation~\eqref{eq:eqm_intT}, they directly obtain that each component of the mean $\bar{\mathbf{q}}:=1/T\int_0^T\mathbf{q}\,dt$ is bounded by $r+TC_f/C_0$. Adding the mean and oscillatory part, Rouche and Mawhin~\cite{Rouche_PO} derive the bound 
\begin{equation}
||\mathbf{q}||_{C^0} \leq \sqrt{N}\left(r+\sqrt{T} \frac{C_f}{C_0} \right) +\frac{TC_f}{C_0},
\label{eq:pos_C0_RM}
\end{equation}
which includes the additional summand $TC_f/C_0$ compared to our bound~\eqref{eq:pos_C0_bound}. 

\subsection{Proof of Theorem~\ref{thm:posdef}}
\label{app:convex}
 In the following, we show that condition~\ref{cond:conv_cond} implies that condition~\ref{cond:sign_cond} is satisfied. We note that each continuous function $S_j(\mathbf{q})$ has a maximum and a minimum value in a ball of radius~$r^*$, which we label with $S_{\max}^j$ and $S_{\min}^j$. Choosing the radius 
\begin{equation}
r_j=r^*+\max(0,\frac{\bar{f}_j-S^j_{min}}{C_v},\frac{S_{\max}^j-\bar{f}_j}{C_v}),
\label{eq:r_estimate}
\end{equation}
ensures that the quantity $q_j(S_j(\mathbf{q})-\bar{f}_j)$ has a constant, non-zero sign for all  
\begin{equation}
\mathbf{q}\in\mathbb{Q}_j:=\{\mathbf{q}\in\mathbb{R}^N|~|q_j|> r_j \}.
\end{equation}
First, we assume a positive definite Hessian outside a ball of radius $r^*$. Using a Taylor series expansion of the nonlinearity, we note that outside the $r^*$ ball the following holds:
\begin{equation}
\mathbf{h}^T(\mathbf{S}(\mathbf{q}+\mathbf{h})-\mathbf{S}(\mathbf{q}))
=\int_0^1 \mathbf{h}^T  \frac{\partial^2 V (\mathbf{q}+s\mathbf{h})}{\partial \mathbf{q}^2}\mathbf{h} ds>C_v|\mathbf{h}|^2,\quad  \mathbf{q},\mathbf{h}\in \mathbb{R}^N,\quad 0\leq s \leq 1,\quad|\mathbf{q}+s\mathbf{h}|>r^*.
\label{eq:pos_def_V}
\end{equation}
For every point $\mathbf{q}\!\in\!\mathbb{Q}_j$, we select $\mathbf{h}$ to be the vector pointing from the $q_j$-axis to $\mathbf{q}$ with minimal length.  Denoting the $j$-th unit vector by $\mathbf{e}_j$, we set $\mathbf{h}\!=\!\mathbf{q}-q_j\mathbf{e}_j$ and $\mathbf{q}=q_j\mathbf{e}_j$. Since $|q_j|\!>\!r^*$,  line connecting $q_j\mathbf{e}_j$ and $\mathbf{q}$ is in the region, where the potential $V(\mathbf{q})$ is positive definite. From equation~\eqref{eq:pos_def_V} we obtain 
\begin{equation}
(\mathbf{q}-q_j\mathbf{e}_j)^T(\mathbf{S}(\mathbf{q})-\mathbf{S}(q_j\mathbf{e}_j))=
\sum_{\begin{array}{c}
	n=1
	\\
	n\neq j
	\end{array}}^N
q_n\left[S_n(\mathbf{q})-S_n(q_j\mathbf{e}_j)\right]>0.
\label{eq:r_est_notej}
\end{equation}
Further, we reduce the $j$-th coordinate until we reach $|q_j|=r^*$. The line connecting between the points $\sign(q_j)r^*\mathbf{e}_j$ and $q_j\mathbf{e}_j$ lies in the region with a positive definite Hessian. We evaluate~\eqref{eq:pos_def_V} for $\mathbf{q}=\sign(q_j)r^*\mathbf{e}_j$ and $\mathbf{h}\!=\!\mathbf{q}-\sign(q_j)r^*\mathbf{e}_j$ to obtain
\begin{equation}
\begin{split}
&(\mathbf{q}-\sign(q_j)r^*\mathbf{e}_j)^T(\mathbf{S}(\mathbf{q})-\mathbf{S}(\sign(q_j)r^*\mathbf{e}_j)\\ =
&\sum_{\begin{array}{c}
	n=1
	\\
	n\neq j
	\end{array}}^N
q_n\left[S_n(\mathbf{q})-S_n(q_j\mathbf{e}_j)\right]+
(q_j-\sign(q_j)r^*)\left[S_j(\mathbf{q})-S_j(\sign(q_j)r^*\mathbf{e}_j)\right]
\\
&>(q_j-\sign(q_j)r^*)\left[S_j(\mathbf{q})-S_j(\sign(q_j)r^*\mathbf{e}_j)\right]
>C_v(q_j-\sign(q_j)r^*)^2,\qquad \mathbf{q}\in \mathbb{Q}_j.
\label{eq:r_estimate_ej}
\end{split}
\end{equation}
 For $q_j\!>\!0$, equation \eqref{eq:r_estimate} implies that $(q_j-\sign(q_j)r^*)$ is positive. Therefore, we obtain from equation~\eqref{eq:r_estimate_ej} that 
\begin{equation}
S_j(\mathbf{q})>C_v(q_j-r^*)+S_j(r^*\mathbf{e}_j)>
C_v(q_j-r^*)+S^j_{\min}>
C_v(r^*+\frac{\bar{f}_j-S_{\min}^j}{C_v}-r^*)+S_{\min}^j=\bar{f}_j,\qquad q_j>r_j.
\label{eq:r_estimat_qjpos}
\end{equation}
Similarly, for  $q_j\!<\!0$ the quantity  $S(q_j-\sign(q_j)r^*)$ is negative, therefore equation~\eqref{eq:r_estimate_ej} implies
\begin{equation}
S_j(\mathbf{q})<C_v(q_j+r^*)+S_j(-r^*\mathbf{e}_j)<C_v(q_j+r^*)+S_{\max}^j<C_v(-r^*-\frac{S_{\max}^j-\bar{f}_j}{C_v}+r^*)+S_{\max}^j=\bar{f}_j,\qquad q_j^*<-r_j.
\label{eq:r_estimat_qjneg}
\end{equation}
Equations \eqref{eq:r_estimat_qjpos} and \eqref{eq:r_estimat_qjneg} together imply
\begin{equation}
q_j(S_j(\mathbf{q})-\mathbf{f}_j)>0,\qquad \mathbf{q}\in\mathbb{Q}_j,
\label{eq:final_cond}
\end{equation}
which is equivalent to the upper condition~\eqref{eq:sign_cond}, if we set $r:=\max_j(r_j)$.

The same argument can be repeated for potentials having a negative definite Hessian. The sign in equation~\eqref{eq:pos_def_V} changes, therefore one obtains 
\begin{equation}
q_j(S_j(\mathbf{q})-\mathbf{f}_j)<0,\qquad \mathbf{q}\in\mathbb{Q}_j,
\label{eq:final_cond2}
\end{equation}
which is equivalent to the lower condition~\eqref{eq:sign_cond}, if we set $r:=\max_j(r_j)$.

\section{Derivations for specific examples}

\subsection{Necessary bound on the forcing amplitude for system~\eqref{eq:eqm_counter1} }
\label{app:C1}

In the following, we prove a necessary bound on the forcing amplitude~\eqref{eq:trigwave} for the existence of periodic solutions for system~\eqref{eq:eqm_counter1} with parameters~\eqref{eq:pars_counter1}. Specifically, we assume the existence of a twice continuous differentiable periodic orbit~$\mathbf{q}^*$. Transforming system~\eqref{eq:eqm_counter1} to modal coordinates, we obtain
\begin{subequations}
	\begin{equation*}
		q_1^*=x_1+x_2,\qquad	q_2^*=x_1-x_2,
	\end{equation*}
	\begin{equation}
		\ddot{x}_1+c_1\dot{x}_1+k_1 x_1 +2\kappa x_1^2=-2\kappa x_2^2,
		\label{eq:c1_quad_dof}
	\end{equation}
	\begin{equation}
		\ddot{x}_2+c_1\dot{x}_2+(k_1+2k_2) x_2 =f_1.
		\label{eq:c1_lin_dof}
	\end{equation}
\end{subequations}
The equation of motion of the second modal degree-of-freedom~\eqref{eq:c1_lin_dof} is linear and therefore the assumed periodic response of the second degree of freedom~$x_2$ can be obtained analytically:
\begin{equation}
	\begin{split}
		x_2&=\frac{8f_m}{\pi^2}\sum_{k=0}^{\infty} \frac{(-1)^k\sin((2k+1)\Omega t-\varphi_k)}{(2k+1)^2\sqrt{((k_1+2k_2)-(2k+1)^2\Omega^2)^2+((2k+1)c_1\Omega)^2}}=\frac{8f_m}{\pi^2}\sum_{k=0}^{\infty} c_k\sin((2k+1)\Omega t-\varphi_k) ,
		\\
		\varphi_k&=\tan^{-1}\left( \frac{(2k+1)c_1\Omega}{(k_1+2k_2)-(2k+1)^2\Omega^2}\right),
	\end{split}
	\label{eq:x2_sol}
\end{equation}
Here we have relabeled the amplitudes for notational convenience. Next, we integrate~\eqref{eq:c1_quad_dof} over one period and impose periodicity to obtain
\begin{equation}
	\int_0^T  \left(k_1 x_1+2 \kappa x_1^2 \right) dt= -2 \kappa \int_0^T x_2^2 dt,
	= - T \kappa  \frac{64f_m^2}{\pi^4} \sum_{k=0}^{\infty}|c_k|^2.
	\label{eq:c1_mean_val_thm}
\end{equation}
The infinite sum converges to the limit $c_{\infty}$, since it can be majorized by $1/k^6$, i.e.
\begin{equation}
	\begin{split}
		c_{\infty}:=\sum_{k=0}^{\infty}|c_k|^2&=
		\sum_{k=0}^{\infty}\frac{1}{(2k+1)^4(((k_1+2k_2)-(2k+1)^2\Omega^2)^2+((2k+1)c_1\Omega)^2)}
		\\
		&\leq\frac{1}{c_1^2\Omega^2}\sum_{k=0}^{\infty}\frac{1}{(2k+1)^6}\leq\frac{1}{c_1^2\Omega^2}\sum_{k=1}^{\infty}\frac{1}{k^6}. 
	\end{split}
\end{equation}
For the parameters~\eqref{eq:pars_counter1}, we compute the value $c_{\infty}$ numerically and obtain
\begin{equation}
	c_{\infty}:= 1371.7577441>1371.757744027918.
\end{equation}  
By the  mean-value theorem applied to equation~\eqref{eq:c1_mean_val_thm}, there must be a time instance~$t^*\!\in \! \left[ 0,T\right]$ at which the integrand on the left-hand side~multiplied by $T$ is equal to the infinite sum on the right hand side. Calculating the minimum of the parabola  in that integrand  and inserting the numerical parameter values~\eqref{eq:pars_counter1} yields
\begin{equation}
	-\frac{k_1^2}{8\kappa}T\leq \left(k_1\tilde{x}_1(t^*)+2 \kappa \tilde{x}_1^2(t^*) \right)T=-\kappa  T\frac{64f_m^2}{\pi^4}c_{\infty}, \qquad 0\leq t^*<T.
	\label{eq:bound_fm}
\end{equation}
Solving~\eqref{eq:bound_fm} for the forcing amplitude, we obtain
\begin{equation}
	|f_m|<\sqrt{\frac{k_1^2\pi^4}{512 \kappa^2 c_{\infty}}}=0.011777,\qquad \kappa>0.
	\label{eq:f_thres}
\end{equation}
Since the forcing amplitude~\eqref{eq:pars_counter1} is above the threshold~\eqref{eq:f_thres}, the periodic orbit indicated by the harmonic balance method does not exist.

\subsection{Failure of the harmonic balance with infinite harmonics}
\label{app:fejer}
In the following we construct a forcing for the linear system~\eqref{eq:lin_sys}, such that even for infinite number of harmonics in ansatz~\eqref{eq:HB_sol}, the harmonic balance procedure yields a periodic orbit that differs from the actual periodic orbit significantly. Generally speaking, the computability of a finite number of terms in a Fourier series of a periodic solution does not guarantee the pointwise convergence of that series to the periodic orbit. We consider the function 
\begin{equation}
f_f=\sum_{k=1}^{K}\frac{2}{k^2}\sin(p_k t)\sum_{l=1}^{q_k}\frac{1}{l}\sin(l t),\qquad p_k=2^{k^3+1},\quad q_k=2^{k^3},
\label{eq:fejer_forcing}
\end{equation}
which is a truncated version of a classic example due to Fej\'{e}r~(c.f. Edwards~\cite{Edwards}). We note that the function~\eqref{eq:fejer_forcing} is analytic and therefore the forcing 
\begin{equation}
f(t)=\ddot{f}_f+c\dot{f}_f+kf_f,
\label{eq:fejer_forcing2}
\end{equation}
is well-defined. Applying this forcing in system~\eqref{eq:lin_sys}, we obtain the periodic orbit in the form $q^*\!=\!f_f$. The harmonic balance procedure, therefore, produces a Fourier series of the function~\eqref{eq:fejer_forcing}.  As Edwards~\cite{Edwards} details, the function $f_f$ can be bounded from above by a constant independent of $K$, while its Fourier series at $t\!=\!0$ is unbounded for $K\!\rightarrow \!\infty$. Therefore, for large enough $K$ the Fourier series of $f_f$ will deviate from the  function~\eqref{eq:fejer_forcing} at $t\!=\!0$. Choosing an appropriately large $K$ leads to a  large deviation of the approximative periodic orbit obtained by the harmonic balance from the unique periodic orbit of system~\eqref{eq:lin_sys} with forcing~\eqref{eq:fejer_forcing}. Therefore, the harmonic balance fails to approximate the periodic orbit at $t\!=\!0$.

\subsection{Proof of Fact~\ref{thm:PO_tvary} }
\label{app:T_vary}

 In the following, we show that no periodic orbit for system~\eqref{eq:T_vary_stiffness} exists, for an appropriately chosen set of parameters. For these sets of parameters, one of the Floquet multipliers of the unforced limit of system~\eqref{eq:T_vary_stiffness} equals to one in norm. This introduces the possibility of resonance between the external periodic forcing and the nontrivial solution of the homogeneous part~\eqref{eq:mat_unforced}, under which no periodic orbit for system~\eqref{eq:T_vary_stiffness} can exist.
 
 For further analysis, we introduce the matrices and vectors
\begin{equation}
\mathbf{x}:=
\begin{bmatrix}
q_1^*\\
\tilde{q}^*
\end{bmatrix},\quad 
\mathbf{A}(t):=
\begin{bmatrix}
0 & 1\\
-k_1-\frac{\kappa A^2}{2}+\frac{\kappa A^2}{2} \cos(2\Omega t -2 \psi) & -c_1
\end{bmatrix},\quad 
\mathbf{g}(t):=
\begin{bmatrix}
0\\
f_1(t)
\end{bmatrix}.
\label{eq:At_sys_notation}
\end{equation}
With the notation~\eqref{eq:At_sys_notation}, we express system~\eqref{eq:T_vary_stiffness} in first-order form
\begin{equation}
\dot{\mathbf{x}}=\mathbf{A}(t)\mathbf{x}+\mathbf{g}(t),\qquad \mathbf{A}(t+T/2)=\mathbf{A}(t),\qquad T=2\pi/\Omega,
\label{eq:At_sys}
\end{equation}
and denote its homogeneous part by
\begin{equation}
\dot{\mathbf{x}}=\mathbf{A}(t)\mathbf{x}.
\label{eq:At_sys_hom}
\end{equation}
Furthermore, we define the adjoint problem,  
\begin{equation}
\dot{\mathbf{y}}=-\mathbf{A}(t)^T\mathbf{y}.
\label{eq:At_adj}
\end{equation}

To show the non-existence of a periodic orbit of system~\eqref{eq:At_sys}, we use the following Theorem:
\begin{theorem}
	\label{thm:PO_Farkas}
	Assume that system~\eqref{eq:At_sys_hom} has~$k$ linearly independent, nontrivial~$T$-periodic solutions and denote $k$ linearly independent$T$-periodic solutions to the adjoint system~\eqref{eq:At_adj} by~$\tilde{\mathbf{y}}_1$,~$\tilde{\mathbf{y}}_2$,...,~$\tilde{\mathbf{y}}_k$. Then the non-autonomous system~\eqref{eq:At_sys} has a~$T$-periodic solution if and only if the orthogonality conditions
	\begin{equation}
	\int_0^T\tilde{\mathbf{y}}_j^T\mathbf{g}(t)dt=0,\qquad j=1,...,k,
	\label{eq:ortho}
	\end{equation}
	hold.
\end{theorem}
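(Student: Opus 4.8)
The plan is to reduce the infinite-dimensional question of $T$-periodicity to a finite-dimensional linear-algebra problem via the monodromy (Poincar\'e) map, and then to recognize the orthogonality conditions~\eqref{eq:ortho} as exactly the Fredholm solvability conditions of that linear system. First I would let $\mathbf{X}(t)$ denote the fundamental matrix solution of the homogeneous system~\eqref{eq:At_sys_hom} normalized by $\mathbf{X}(0)=\mathbf{I}$, and set the monodromy matrix $\mathbf{M}:=\mathbf{X}(T)$. By the variation-of-parameters formula, every solution of~\eqref{eq:At_sys} has the form
\begin{equation*}
\mathbf{x}(t)=\mathbf{X}(t)\mathbf{x}(0)+\mathbf{X}(t)\int_0^t\mathbf{X}(s)^{-1}\mathbf{g}(s)\,ds,
\end{equation*}
and such a solution is $T$-periodic if and only if $\mathbf{x}(T)=\mathbf{x}(0)$. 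Imposing this condition turns the problem into the linear equation
\begin{equation*}
(\mathbf{I}-\mathbf{M})\mathbf{x}(0)=\mathbf{M}\int_0^T\mathbf{X}(s)^{-1}\mathbf{g}(s)\,ds=:\mathbf{b}
\end{equation*}
for the unknown initial value $\mathbf{x}(0)\in\mathbb{R}^{2}$, so a $T$-periodic solution of~\eqref{eq:At_sys} exists precisely when this equation is solvable.

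Next I would invoke the finite-dimensional Fredholm alternative: $\mathbf{b}\in\operatorname{Range}(\mathbf{I}-\mathbf{M})$ if and only if $\mathbf{v}^T\mathbf{b}=0$ for every $\mathbf{v}\in\ker(\mathbf{I}-\mathbf{M}^T)$. The task then becomes to identify this kernel with the span of the given adjoint periodic solutions. Differentiating $\mathbf{X}^{-1}$ via $\tfrac{d}{dt}\mathbf{X}^{-1}=-\mathbf{X}^{-1}\mathbf{A}$ shows that $\mathbf{Y}(t):=(\mathbf{X}(t)^{-1})^T$ solves the adjoint equation~\eqref{eq:At_adj} with $\mathbf{Y}(0)=\mathbf{I}$, so the adjoint monodromy matrix is $(\mathbf{M}^T)^{-1}$. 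Consequently the $T$-periodic solutions of the adjoint are exactly $\tilde{\mathbf{y}}_j(t)=(\mathbf{X}(t)^{-1})^T\tilde{\mathbf{y}}_j(0)$ with $\tilde{\mathbf{y}}_j(0)\in\ker\bigl(\mathbf{I}-(\mathbf{M}^T)^{-1}\bigr)=\ker(\mathbf{I}-\mathbf{M}^T)$; since $\operatorname{rank}(\mathbf{I}-\mathbf{M})=\operatorname{rank}(\mathbf{I}-\mathbf{M}^T)$, this kernel is $k$-dimensional, so the $k$ independent vectors $\tilde{\mathbf{y}}_j(0)$ form a basis of it.

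The decisive computation is then to match the integral condition with the algebraic one. Using $\tilde{\mathbf{y}}_j(t)^T=\tilde{\mathbf{y}}_j(0)^T\mathbf{X}(t)^{-1}$ and the invariance $\mathbf{M}^T\tilde{\mathbf{y}}_j(0)=\tilde{\mathbf{y}}_j(0)$, I would establish
\begin{equation*}
\int_0^T\tilde{\mathbf{y}}_j^T\mathbf{g}\,dt=\tilde{\mathbf{y}}_j(0)^T\int_0^T\mathbf{X}(s)^{-1}\mathbf{g}(s)\,ds=\tilde{\mathbf{y}}_j(0)^T\mathbf{b},
\end{equation*}
where the last equality absorbs the factor $\mathbf{M}$ hidden in $\mathbf{b}$ because $\tilde{\mathbf{y}}_j(0)^T\mathbf{M}=(\mathbf{M}^T\tilde{\mathbf{y}}_j(0))^T=\tilde{\mathbf{y}}_j(0)^T$. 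Hence~\eqref{eq:ortho} holds for all $j$ if and only if $\mathbf{b}$ is orthogonal to a basis of $\ker(\mathbf{I}-\mathbf{M}^T)$, which by the Fredholm step is equivalent to solvability of the equation for $\mathbf{x}(0)$, and thus to the existence of a $T$-periodic solution.

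The hard part will not be any single estimate but the careful bookkeeping that matches the integral orthogonality condition, which weights the \emph{entire} adjoint orbit against $\mathbf{g}$ over one period, with the purely algebraic condition $\tilde{\mathbf{y}}_j(0)^T\mathbf{b}=0$ evaluated only at $t=0$. This hinges simultaneously on the transpose-inverse identity linking $\mathbf{X}$ to the adjoint fundamental matrix, on the eigenvector invariance $\mathbf{M}^T\tilde{\mathbf{y}}_j(0)=\tilde{\mathbf{y}}_j(0)$, and on the rank identity ensuring that the supplied solutions $\tilde{\mathbf{y}}_1,\dots,\tilde{\mathbf{y}}_k$ exhaust $\ker(\mathbf{I}-\mathbf{M}^T)$ rather than spanning only a proper subspace; getting these three ingredients to align correctly is where the proof must be written with care.
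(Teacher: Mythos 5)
Your proof is correct and complete: the variation-of-parameters reduction to $(\mathbf{I}-\mathbf{M})\mathbf{x}(0)=\mathbf{b}$, the finite-dimensional Fredholm alternative, and the identification of the adjoint $T$-periodic solutions with $\ker(\mathbf{I}-\mathbf{M}^T)$ via the transpose-inverse fundamental matrix is precisely the classical argument; the paper itself gives no proof (it defers entirely to Farkas), and the proof in that reference follows this same route, so yours is essentially the same approach. The only point worth making explicit is that the hypothesis must be read as saying $k$ is \emph{exactly} the dimension of the space of $T$-periodic solutions of~\eqref{eq:At_sys_hom}, i.e.\ $\dim\ker(\mathbf{I}-\mathbf{M})=k$, since your rank identity needs this to conclude that $\tilde{\mathbf{y}}_1(0),\dots,\tilde{\mathbf{y}}_k(0)$ span all of $\ker(\mathbf{I}-\mathbf{M}^T)$ rather than a proper subspace.
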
 
\begin{proof}
	For a proof, we refer to Farkas~\cite{Farkas_PO}.
\end{proof}

First, we note system~\eqref{eq:At_sys_hom} is periodic with period~$T/2$ (cf. equation~\eqref{eq:At_sys}), where $T$ is determined by the external forcing $f_2$ (cf. equation~\eqref{eq:T_vary_stiffness}). We denote the complex conjugate Floquet multipliers of system~\eqref{eq:At_sys} by $\rho_1$ and~$\rho_2$ and further  obtain from Liouvilles theorem that
\begin{equation}
\rho_1\rho_2=e^{\int_{t_0}^{t_0+T/2} \Tr\left[ \mathbf{A}(s)\right] ds}=e^{-\frac{c_1T}{2}}, \qquad \rho_1=\bar{\rho}_2.
\label{eq:Flo_mult}
\end{equation}
Equation~\eqref{eq:Flo_mult} imply that the Floquet multipliers are located either on the circle with radius~$e^{-c_1T/4}$ (red circle in Fig.~\ref{fig:Flo_mult}) or on the real axis (blue line in Fig.~\ref{fig:Flo_mult}) in the complex plane.

If the forcing~$f_2$ is zero, then the parameter~$A$ in system~\eqref{eq:T_vary_stiffness} is zero and, due to the positive damping value~$c_1$, the trivial solution of system~\eqref{eq:At_sys_hom} stable. Therefore, the Floquet multipliers are located on the red circle in Fig.~\ref{fig:Flo_mult}.  If we observe instability of the trivial solution to eq.~\eqref{eq:At_sys_hom} for some non-zero forcing ($A\!\neq\! 0$), then the Floquet multipliers must have crossed the unit circle in the complex plane. In this critical case, one of the Floquet multipliers is either one or negative one, which we mark with a black square in Fig.~\ref{fig:Flo_mult}. 

\begin{figure}[ht!]
	\begin{center}
\begin{tikzpicture}
\draw[->] (-1,2) -- (5,2) node[above]{$\mbox{Re}(\rho)$};
\draw[->] (2,-0.5) -- (2,4.5)node[left]{$\mbox{Im}(\rho)$};
\draw[red,thick] (2,2) circle (2cm);
\draw[dashed] (2,2) circle (2.25cm);
\draw[blue, thick] (-0.75,2) -- (1.95,2);
\draw[blue, thick] (2.05,2) -- (4.75,2) ;

\filldraw (4.2,1.95) rectangle ++ (0.1,0.1);
\draw (4.25,2) node[below]{\colorbox{white}{$1$}};
\draw (4.25,2)-- (4.25,1.8) ;

\filldraw (-0.3,1.95) rectangle ++ (0.1,0.1);
\draw (-0.35,2) node[below]{\colorbox{white}{$-1$}};
\draw (-0.25,2)-- (-0.25,1.8);

\draw[latex -latex] (2,2) -- (3.41421,3.41421) node [midway, above, sloped]  {$e^{-\frac{c_1T}{4}}$};
\end{tikzpicture}
	\end{center}
	\caption{Locations of the Floquet multipliers of system~\eqref{eq:At_sys_hom} in the complex plane. The two critical cases,~$\rho_1=1$ and~$\rho_1=-1$, are marked with black squares. }
	\label{fig:Flo_mult}
\end{figure}
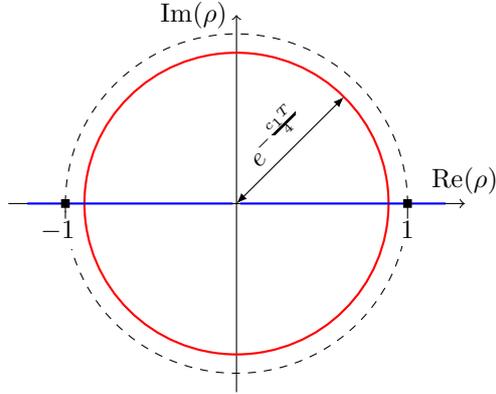

If one of the multipliers,~$\rho_1$, is one, there exists a non trivial~$T/2$-periodic solution of the homogeneous part of system~\eqref{eq:At_sys}. In the case of a Floquet multiplier of negative one, a non-trivial~$T$-periodic solution exists (cf. Farkas~\cite{Farkas_PO}). As Farkas details further, in both cases, the adjoint system~\eqref{eq:At_sys_hom} has a non-trivial~$T/2$ or~$T$-periodic solution, which we denote by~$\tilde{\mathbf{y}}$. Analyzing equation~\eqref{eq:At_adj}, we conclude that a nontrivial~$\tilde{\mathbf{y}}$ implies a non-constant value of both coordinates~$\tilde{y}_1(t)$ and~$\tilde{y}_2(t)$.  We choose  the forcing
\begin{equation}
f_1(t)=
\begin{cases}
\tilde{y}_2(t),& \text{if } \int_0^T\tilde{y}_2(t) dt<0,
\\
-\tilde{y}_2(t)   & \text{if } \int_0^T\tilde{y}_2(t) dt \geq 0,
\end{cases}
\label{eq:f1_forcing}
\end{equation}
which satisfies the negative mean-forcing requirement~\eqref{eq:conv_cond_eval}. Further, note that the forcing~\eqref{eq:f1_forcing}  is~$T/2$ periodic for the case~$\rho_1=1$ and~$T$ periodic in the case~$\rho_1=-1$.  Then the orthogonality condition is 
\begin{equation}
\int_0^{T} \tilde{\mathbf{y}}\mathbf{g}(t)dt=  \pm \int_0^{T} \tilde{y}_2^2dt\neq 0,
\label{eq:orto_cond}
\end{equation}  
where the sign depends on the choice of the forcing~\eqref{eq:conv_cond_eval}. Clearly, the orthogonality condition~\eqref{eq:orto_cond} is not satisfied and therefore, by theorem~\ref{thm:PO_Farkas}  system~\eqref{eq:T_vary_stiffness}, has no periodic solution. 

\subsection{Proof of Fact~\ref{thm:global_min}}
\label{app:global_extrm}
In the following, we show  that no periodic orbit for system~\eqref{eq:sys0} exists if the geometric nonlinearities possess a global minimum, and the mean forcing is below this minimum value (i.e., equation~\eqref{eq:global_min} is satisfied). To prove the nonexistence of a $T$-periodic orbit, we proceed as in Appendix~\ref{app:C1}, assuming the existence of a twice differentiable  periodic orbit~$\mathbf{q}^*$ for system~\eqref{eq:sys0}. Integrating equation~\eqref{eq:sys0} for one period and imposing periodicity yields
\begin{equation}
\int_0^T \mathbf{S}( \mathbf{q}^*(t)) dt=T\bar{\mathbf{f}},\qquad \Leftrightarrow \qquad \int_0^T \left( \mathbf{S}(\mathbf{q}^*(t)) -\bar{\mathbf{f}} \right)dt=0 .
\label{eq:c2_int}
\end{equation}   
By the mean-value theorem, there exist time instances $t^*_j$ within the period at which the integrand in equation~\eqref{eq:c2_int} is equal to zero, i.e.,
\begin{equation}
S_j(\mathbf{q}^*(t^*_j))-\bar{f}_j=0,\qquad j=1,...,N,\quad 0\leq t_j\leq T.
\label{eq:mvt_S}
\end{equation}
 However, due to the choice of the forcing~\eqref{eq:global_min}, we obtain for $j\!=\!l$ that
\begin{equation}
S_l(\mathbf{q}^*(t^*))-\bar{f}_l>0,
\end{equation}   
which contradicts~\eqref{eq:mvt_S}. Therefore, the periodic orbit cannot exist.

\subsection{Proof of Fact~\ref{thm:xsprt_NL}}
\label{app:xsprt_NL}
In the following, we prove that if the forcing amplitude $f$ in the oscillator~\eqref{eq:eqm_x^2} is above the threshold~\eqref{eq:c4_f_thres}, then no periodic solution to system~\eqref{eq:eqm_x^2} exists. Again, we assume the existence of a twice continuous differentiable periodic orbit~$q^*$ and split the coordinate~$q^*$ into a constant and a purely oscillatory part, i.e.
\begin{equation}
\bar{q}:=\frac{1}{T}\int_0^Tq^*(t)dt,\qquad  \tilde{q}(t)=q^*-\bar{q}.
\label{eq:q_split}
\end{equation}	
Substituting the definitions~\eqref{eq:q_split} into the equation of motion~\eqref{eq:eqm_x^2}, yields
\begin{equation}
\ddot{\tilde{q}}+c\dot{q}+\omega_0^2(\bar{q}+\tilde{q})+\kappa (\bar{q}^2+2\bar{q}\tilde{q}+\tilde{q}^2) =f\cos(\Omega t).
\label{eq:eqm_qt}
\end{equation}
Integrating equation~\eqref{eq:eqm_qt} over one period, we obtain
\begin{equation}
\int_0^T \tilde{q}^2dt =-T\left(\frac{\omega^2}{\kappa}\bar{q}+\bar{q}^2\right)\leq\frac{T\omega^4}{4\kappa^2},
\label{eq:qt_L2_bnd}
\end{equation}
where we have used that~$\tilde{q}$ has zero mean (cf. definition~\eqref{eq:q_split}). Furthermore, we note that the left-hand side of~\eqref{eq:qt_L2_bnd} is positive. Since the right-hand side of equation~\eqref{eq:qt_L2_bnd} is a parabola which is concave downwards, it is positive on a closed interval. We thus obtain the upper bound on~$\bar{q}$ in the form 
\begin{equation}
|\bar{q}|<\frac{\omega^2}{|\kappa|},
\label{eq:x0_bound}
\end{equation} 
which is independent of the sign of~$\kappa$. Since~$q^*$ is twice continuously differentiable, it can be expressed in a convergent Fourier series. We denote the Fourier coefficients of~$\tilde{q}$ by
\begin{equation}
\tilde{q}^k:=\frac{1}{T}\int_0^Tq_t e^{-\mathrm{i} k \Omega t} dt,~\qquad k\in\mathbb{Z}.
\label{eq:xt_Fseries}
\end{equation}  
Using Parseval's identity and equation~\eqref{eq:qt_L2_bnd}, we obtain an upper bound on the Fourier coefficients of the assumed periodic orbit as follows
\begin{equation}
|\tilde{q}^k|\leq\left(\sum_{k\in \mathbb{Z}}|\tilde{q}^k|^2\right)^{1/2}=\left( \frac{1}{T} \int_0^T \tilde{q}^2dt\right)^{1/2}\leq \frac{ \omega^2}{2|\kappa|},\qquad k\in \mathbb{Z}.
\label{eq:xt_bound}
\end{equation}
Multiplying equation~\eqref{eq:eqm_qt} with~$e^{-\mathrm{i}\Omega t}$ and integrating over one period yields
\begin{equation}
\int_0^T(\ddot{\tilde{q}}+c\dot{\tilde{q}}+\omega_0^2\tilde{q}+2\kappa\bar{q}\tilde{q}) e^{-i \Omega t} dt+\int_0^T\kappa \tilde{q}^2 e^{-i \Omega t} dt=\frac{f}{2}.
\label{eq:F_entw}
\end{equation}
From equation~\eqref{eq:F_entw}, we obtain
\begin{equation}
\begin{split}
\left|\frac{f}{2}\right|
&\leq\left|\int_0^T(\ddot{\tilde{q}}+c\dot{\tilde{q}}+\omega_0^2\tilde{q}+2\kappa\bar{q}\tilde{q}) e^{-i \Omega t} dt\right|+|\kappa|\int_0^T |\tilde{q}^2(t)||e^{-i\Omega t}| dt 
\\
&\leq  |(-\Omega^2+\mathrm{i}c\Omega+\omega^2+2\kappa\bar{q})\tilde{q}^1|+|\kappa|\frac{\omega^4}{4\kappa^2}
\leq 
\frac{ \omega^2}{ 2|\kappa|} \left(|-\Omega^2+\mathrm{i}c\Omega+\omega^2|+ 2\omega^2 \right)+|\kappa|\frac{\omega^4}{4\kappa^2},
\end{split}
\label{eq:c4_contracition}
\end{equation}
where we have used the upper bounds~\eqref{eq:x0_bound} and~\eqref{eq:xt_bound}.  Equation~\eqref{eq:c4_contracition} gives an upper bound for the forcing amplitude $f$ of the oscillator~\eqref{eq:eqm_x^2}. For forcing amplitudes exceeding this threshold, we obtain a contradiction and therefore no periodic orbit can exist for the oscillator~\eqref{eq:eqm_x^2}.

\subsection{Proof of Fact~\ref{thm:chain_sys}}
\label{app:chain_sys}
We show that the chain system~\eqref{eq:eqm_chain} with the parameters~\eqref{eq:pars_chain} satisfies the conditions of Theorem~\ref{thm:Existence} and hence a steady-state response exists. First, we show that the conditions~\ref{cond:potential} and~\ref{cond:conv_cond} on the geometric nonlinearities  are satisfied for the set of parameters~\eqref{eq:pars_chain}. The definiteness of the damping matrix (i.e. condition~\ref{cond:damping}) can be shown in a fashion~ similar to the definiteness of the Hessian. 

As for condition~\ref{cond:potential}, the spring forces of system~\eqref{eq:eqm_chain} can be derived from the potential
\begin{equation}
V(\mathbf{q})=\int_0^{q_1}S_{1}(-p) dp +\sum_{j=2}^{N} \int_0^{q_{j-1}-q_j} S_{j}(p) d p+\int_0^{q_N}S_{N+1}(p) dp.
\label{eq:pot_chain}	
\end{equation}
Since the spring forces in of system~\eqref{eq:eqm_chain} are continuous by assumption, the integrals in equation~\eqref{eq:pot_chain} exist. With the notation 
\begin{equation}
S_{j,l}:=\frac{\partial }{\partial q_l}\left(S_j(q_{j-1}-q_j)\right),
\end{equation}
the  Hessian of the potential is given by 
\begin{equation}
\small
\mathbf{H}:=\frac{\partial^2 V(\mathbf{q})}{\partial \mathbf{q}^2}=
\begin{bmatrix}
-S_{1,1}+S_{2,1} & S_{2,2}& 0&& &
\\
 -S_{2,1}& -S_{2,2}+S_{3,2} & S_{3,3}& 0 &&
\\
0& -S_{3,2}& -S_{3,3}+S_{4,3} &S_{4,4}& 0 & 
\\
 &0&\ddots &\ddots&\ddots &0
 \\
 & &0 &-S_{N\!-\!1,N\!-\!2} & -S_{N\!-\!1,N\!-\!1} + S_{N,N\!-\!1} &S_{N,N}
 \\
  &  &   &0& -S_{N,N\!-\!1} & -S_{N,N}+S_{N\!+\!1,N}
\end{bmatrix}.
\label{eq:chain_hessian}
\end{equation}
 Due to the choice of parameters~\eqref{eq:pars_chain}, we have following identities
 \begin{equation}
 S_{j,j}<0,\qquad S_{j+1,j}>0,\qquad S_{j,j}=-S_{j,j-1},
 \end{equation} 
 which implies that  the main diagonal entries of the Hessian~\eqref{eq:chain_hessian} are positive and the off-diagonal elements negative. We define the matrices
 \begin{equation}
 \mathbf{H}^j=\begin{bmatrix}
 -S_{1,1}-S_{2,2} & S_{2,2}& 0&& &
 \\
 S_{2,2}& -S_{2,2}-S_{3,3} & S_{3,3}& 0 &&
 \\
 0& S_{3,3}& -S_{3,3}-S_{4,4} &S_{4,4}& 0 & 
 \\
 &0&\ddots &\ddots&\ddots &S_{j,j}
 \\
 &  &   &0& S_{j,j} & -S_{j,j}
 \end{bmatrix}\in\mathbb{R}^{j\times j},
 \label{eq:Hj}
 \end{equation}
 which are equivalent to the leading minors of the Hessian, except for the last term in the main diagonal where the term $-S_{j+1,j}$ is missing. Therefore, $\mathbf{H}^N$ is not equal to  $\mathbf{H}$. The matrices $\mathbf{H}^j$ can be constructed recursively as follows
 \begin{equation}
 \mathbf{H}^1=-S_{1,1}, 
 \quad 
 \mathbf{H}^{j+1}=
 \begin{bmatrix}
 \mathbf{H}^j & \mathbf{0}
\\
 \mathbf{0}&0
 \end{bmatrix}
 +
 \begin{bmatrix}
 \mathbf{0} & \mathbf{0} &\mathbf{0}
 \\
\mathbf{0} & -S_{j,j} &S_{j,j}
\\
\mathbf{0} & S_{j,j} &-S_{j,j}
 \end{bmatrix}.
  \end{equation}
  We show that the matrices $\mathbf{H}^j$ are positive definite by induction. As a first step, we note that $\mathbf{H}^1$ is positive definite. Performing the induction step, we have
  \begin{equation}
  \mathbf{x}^T\mathbf{H}^{j}\mathbf{x}=
  \mathbf{x}^T \begin{bmatrix}
  \mathbf{H}^{j-1} & \mathbf{0}
  \\
  \mathbf{0}&0
  \end{bmatrix}
   \mathbf{x}+
  \mathbf{x}^T 
  \begin{bmatrix}
  \mathbf{0} & \mathbf{0} &\mathbf{0}
  \\
  \mathbf{0} & -S_{j,j} &S_{j,j}
  \\
  \mathbf{0} & S_{j,j} &-S_{j,j}
  \end{bmatrix}
   \mathbf{x}.
   \label{eq:induc_step}
  \end{equation}
  Since the matrix $\mathbf{H}^{j-1}$ is positive definite, the first summand in~\eqref{eq:induc_step} is always positive unless $\mathbf{x}$ aligns with the $x_j$-axis, i.e.  $x_1\!=\!x_2\!=\!...\!=\!x_{j-1}\!=\!0$. Along this axis the first quadratic form is zero, the second quadratic form, however, yields $-S_{j,j}x_j^2$ which is positive. For the case $x_j\!=\!0$ and $|\tilde{\mathbf{x}}|\!=\!|\left[x_1,\dots,x_j-1\right]^T|\!>\!0$, we obtain
  \begin{equation}
  \tilde{\mathbf{x}}^T\mathbf{H}^{j-1} \tilde{\mathbf{x}}+\mathbf{x}^T 
  \begin{bmatrix}
  \mathbf{0} & \mathbf{0} &\mathbf{0}
  \\
  \mathbf{0} & -S_{j,j} &S_{j,j}
  \\
  \mathbf{0} & S_{j,j} &-S_{j,j}
  \end{bmatrix}
  \mathbf{x}
  \geq \tilde{\mathbf{x}}^T\mathbf{H}^{j-1} \tilde{\mathbf{x}},\quad   |\tilde{\mathbf{x}}|>0,
  \end{equation}
  where we have used the fact, that the matrix in the second quadratic form in equation~\eqref{eq:induc_step} is positive semi definite.  Mering both cases
  \begin{equation}
  \mathbf{x}^T\mathbf{H}^{j}\mathbf{x}\geq
  \left\{
  \begin{array}{l l l}
-S_{j,j} x_j^2 >0, & |\tilde{\mathbf{x}}|=0, & |x_j|>0,
 \\[3mm]
 \tilde{\mathbf{x}}^T\mathbf{H}^{j-1} \tilde{\mathbf{x}} >0, & |\tilde{\mathbf{x}}|>0, & x_j=0,
  \end{array}
  \right.  
  \end{equation}
   which implies positive definiteness of all matrices $\mathbf{H}^j$.   Since the Hessian can be written as the sum of the positive definite matrix $\mathbf{H}^N$ and a positive semidefinite the matrix, i.e.
  \begin{equation}
\mathbf{H}=\mathbf{H}^N+
\begin{bmatrix}
\mathbf{0} & \mathbf{0}\\
\mathbf{0} & -S_{N+1,N}	
\end{bmatrix},
\label{eq:add_last}
  \end{equation}
  we conclude that the Hessian~\eqref{eq:chain_hessian} positive definite. 
  
  Since the damping matrix is in the form of the Hessian~\eqref{eq:chain_hessian}, the positive definiteness proof applies for the damping matrix as well. Therefore, we have verified the remaining condition~\ref{cond:damping} of Theorem~\ref{thm:Existence}, and the existence of a periodic orbit is guaranteed by Theorem~\ref{thm:Existence}.

  We note that in the case of $S_{N+1,N}\!=\!0$, the Hessian $\mathbf{H}$ coincides with the matrix $\mathbf{H}^N$, which is positive definite. Therefore, the assumptions on the parameters~\eqref{eq:pars_chain} can be relaxed to include this case.

\end{document}